\documentclass[11pt]{amsart}

\numberwithin{equation}{section}

\usepackage{amsfonts,amsmath,amsopn,amssymb,amsthm,color,enumitem}
\usepackage{amscd}
\usepackage{graphics}
\usepackage{color}
\usepackage{esint}
\usepackage[bookmarksopen=true]{hyperref}
\usepackage{soul}

\makeatletter
\@namedef{subjclassname@2020}{\textup{2020} Mathematics Subject Classification}
\makeatother

\newtheorem{theorem}{Theorem}[section]
\newtheorem{lemma}[theorem]{Lemma}

\newtheorem{proposition}[theorem]{Proposition}

\newtheorem{corollary}[theorem]{Corollary}

\theoremstyle{definition} 
\newtheorem{remark}[theorem]{Remark}

\def\R{\mathbb{R}}
\def\RN{\mathbb{R}^N}
\def\S{\mathbb{S}}
\def\SN{\mathbb{S}^N}

\def\HN{\mathbb{H}^N}

\def\nd{\kappa}

\begin{document}

\title[
Overdetermined problems for the Poisson equation]{Overdetermined problems for the rotationally invariant Poisson equation in model manifolds}

\author{Antonio Greco}
\address{(A.~Greco)
	Dipartimento di Matematica e Informatica,
	Universit\`a degli Studi di Cagliari,
	via Ospedale 72,
	09124 Cagliari,
	Italy}
\email{greco@unica.it}

\author{Marcello Lucia}
\address{(M.~Lucia)
	Mathematics Department,
	College of Staten Island,
	City University of New York,
	Staten Island NY 10314,
	USA}
\email{marcello.lucia@csi.cuny.edu}

\author{Pieralberto Sicbaldi}
\address{(P.~Sicbaldi)
	IMAG, Departamento de An\'alisis matem\'atico,
	Universidad de Granada,
	Campus Fuentenueva,
	18071 Granada,
	Spain \& Aix Marseille Universit\'e - CNRS, Centrale Marseille - I2M, Marseille, France}
\email{pieralberto@ugr.es}

\thanks{{\bf Acknowledgements.} 
        A.\ G.\ is a member of the Grup\-po Na\-zio\-na\-le per 
l'A\-na\-li\-si Ma\-te\-ma\-ti\-ca, la Pro\-ba\-bi\-li\-t\`{a} e le lo\-ro Ap\-pli\-ca\-zio\-ni (GNAMPA) and has been supported by the research project {\em PDEs and their role in understanding natural phenomena}, CUP F23C25000080007, funded by Fon\-da\-zio\-ne di Sar\-de\-gna, annuity 2023.
M. L. has been supported by the Simons Foundation MP-TSM-00002452.
	P.\ S.\ has been supported by the FEDER-MINECO Grants PID2020-117868GB-I00 and PID2023-150727NB-I00, and the \emph{IMAG-Maria de Maeztu} Excellence Grant CEX2020-001105-M/AEI/10.13039/501100011033. 
	This work started in Spring 2024 when M. L. and P.\ S.\ were visiting the University of Cagliari. The authors want also to thank the Visiting Professor/Scientist Programme sponsored by the Re\-gio\-ne Au\-to\-no\-ma della Sar\-de\-gna.}

\keywords{Overdetermined elliptic problems. Poisson equation. Model manifolds. Space forms. Rigidity results.}

\subjclass[2020]{35R01; 35N25, 53C24.}

\pdfbookmark[2]{Abstract}{Abstract}
\begin{abstract}
We present rigidity results for overdetermined problems associated to the rotationally invariant Poisson equation
  $-\Delta_{g_\mathcal{M}} u = f(r)$ in a model manifold $\mathcal{M} = [0,S) \times_h \mathbb S^{N-1}$ with warping function $h$. 
 The variable $r$ ranges in the interval $[0,S)$, whose endpoint $S$ is positive and possibly infinite. The first part of the paper deals with
  the problem
	\[
	\begin{cases}
		-\Delta_{g_\mathcal{M}}  {u}=f(r) &\mbox{in $\Omega$},\\
		u=\varphi(r) &\mbox{on $\partial \Omega$},\\
		\frac{\partial u}{\partial \nu} = \nd(r) &\mbox{on $\partial \Omega$},
	\end{cases}
\]
where $\Omega \subset \mathcal{M}$ is a bounded domain containing the point $O \in \mathcal{M}$ corresponding to $r = 0$, $\nu$~is the exterior unit normal
vector on~$\partial \Omega$, and $f$, $\varphi$, $\nd$ are three prescribed functions. 

In the second part of the paper, we consider a similar overdetermined problem for the exterior  Bernoulli problem in a domain $\Omega \setminus \overline B_{R_0}(O)$, 
where $B_{R_0}(O)$ denotes the geodesic ball centered at~$O$ with
radius~$R_0$, within the class of functions that vanish on $\partial B_{R_0}(O)$.
In both cases, we give conditions on $f$, $\varphi$ and~$\nd$ implying that the solution $u$ is radial and $\Omega$ is a geodesic ball centered at~$O$. Our results apply in particular to the three space forms $\RN$, $\HN$ and $\SN$.
\end{abstract}

\maketitle

\section{Introduction}

Overdetermined problems, where both Dirichlet and Neumann boundary conditions are prescribed, arise naturally in mathematical physics and 
Serrin's celebrated result  \cite{serrin}  have been a turning point in this topic. 
His moving plane method, together with a refinement of Hopf's Lemma, led to the conclusion that the ball is the only bounded domain $\Omega \subset \mathbb R^N$ that permits the existence of a solution to the problem
\begin{equation} \label{eq:Serrin}
  - \Delta u = 1, 
  \quad
  u = 0 \, \hbox{ on } \partial \Omega, 
  \quad
  \frac{\partial u}{\partial \nu} = \hbox{const.} \hbox{ on } \partial \Omega.
\end{equation}
An alternative proof in $\RN$ has been given by Weinberger \cite{W} by applying the ``$P$-function method". 
If one considers a general Lipschitz continuous nonlinearity $f(u)$ instead of a constant,
Serrin's strategy still works under the additional requirement that the problem admits a {\it positive} solution
(see also \cite{GaLe,PS07} for relaxing the regularity requirements of Serrin's initial result). We stress that assuming positivity is crucial, and in fact nontrivial examples of
 sign-changing solutions to overdetermined problems
$$
  - \Delta u = f(u), 
  \quad
  u = 0 \, \hbox{ on } \partial \Omega, 
  \quad
  \frac{\partial u}{\partial \nu} = \hbox{const.} \hbox{ on } \partial \Omega,
$$
have recently been constructed in domains $\Omega$ different from a ball for some suitable nonlinearity $f(u)$ (see \cite{R}). 

\smallskip

Since then, Serrin's rigidity result has been investigated with non-constant boundary conditions or by considering domains in general manifolds. 
In this latter case, one faces some obstructions for the validity of such a result. On the $N$-dimensional sphere, for example, 
a solvable instance of~(\ref{eq:Serrin}) is readily constructed in an equatorial strip. Less trivial examples were given by Berenstein and Karlovitz \cite[Theorem~3]{Karlovitz} by considering domains of $\S^N$, $N \ge 3$, whose boundary is a suitable Clifford torus.
However, given a bounded domain $\Omega$ of the hyperbolic space $\HN$, or a domain 
 $\Omega \subset \S^N$ contained in an hemisphere, Molzon \cite{Molzon:1988} (see also \cite[Theorem~4]{molzon}) was able to show that if problem~\eqref{eq:Serrin} is solvable then $\Omega$ must be a geodesic ball
and in such a case the solution is radial (clearly one replaces $\Delta$ in~\eqref{eq:Serrin} by the Laplace-Beltrami operator $\Delta_g$, where $g$ is the metric of the manifold). In \cite[Theorem~2]{molzon}, it was shown that the conclusion still holds if one considers in a hemisphere the Poisson equation $-\Delta_{g \,} u = f(r)$ with $f(r) = N \cos r$ where $r$ is the geodesic distance to the north pole. An inspection in the proof shows that in such a case $\Omega$ is in fact a geodesic ball centered at the north pole. Here again the main tool is provided by the reflection arguments implemented in Serrin's original proof. The case of a nonlinearity $f(u)$, in place of $f(r)$, was considered  
by Kumaresan and Prajapat  \cite{KuPr} for domains in $\mathbb H^N$ or in hemispheres. They showed that if the problem
\begin{equation}\label{pr0}
    - \Delta_{g \,} u = f(u) , \quad
  u = 0 \, \hbox{ on } \partial \Omega, 
  \quad
  \frac{\partial u}{\partial \nu} = \hbox{const.} \hbox{ on } \partial \Omega,
\end{equation}
admits a {\it positive} solution, then the moving plane technique still allows to conclude that $\Omega$ must be a geodesic
ball and the solution $u$ depends only on the distance from the center of~$\Omega$. 
Furthermore, when  $f(u) =  N K u + 1$ where $K$ stands for the sectional curvature of the $N$-dimensional space form,  Ciraolo and Vezzoni (\cite{CV}) have extended
Weinberger's method and recovered
such a rigidity result by making use of appropriate $P$-functions. 
We stress that in $\SN$, positive solutions~$u$ to \eqref{pr0} with $f(u) = u^p-u$ (for some $p>1$) have been constructed in \cite{RSW2} on some domains $\Omega$ (not contained in any half-sphere) that are diffeomorphic, but not equal, to a geodesic ball. We also point out that the moving plane technique, based on the reflection with respect to all directions, can be carried on only if the ambient manifold is a space form, see again~\cite{RSW2}. 

\smallskip

Space forms can be described as a special cases of 
``model manifolds", also referred to as  ``spherically symmetric manifolds". Those are by definition smooth differentiable $N$-dimensional manifolds $\mathcal{M}$
diffeomorphic to $\RN$, $N \geq 2$, given by the quotient space $[0,S) \times \mathbb S^{N - 1}\, /  \sim$ for some $S\in (0,+\infty]$, where $\sim$ identifies all the elements of $\{\, 0 \,\} \times \mathbb S^{N - 1}$, 
endowed with the rotationally symmetric metric
\begin{equation} \label{eq:RiemannianMetric}
g_\mathcal{M} = dr^2 + h(r)^2\, g_{\mathbb S^{N-1}}\,.
\end{equation}
Here $g_{\mathbb S^{N-1}}$ denotes the canonical metric of the $(N-1)$-dimensional unit spherical surface~$\mathbb S^{N-1}$ and $h(r)$ is a smooth function over the interval $[0,S)$ satisfying 
$$
   h(r) > 0 \hbox{ for any } r > 0, 
   \quad
   h'(0) =1,
   \quad  \hbox{ and } 
   h^{(2k)}(0) = 0  \hbox{ for } k = 0,1,2,\ldots .
   $$ 
Using standard notations, we can write $\mathcal{M} = [0,S) \times_h \mathbb S^{N-1}$.
The point corresponding to the elements of $\{\, 0 \,\} \times \mathbb S^{N - 1}$ is called {\it pole} and denoted by~$O$.  The mentioned conditions on the warping function~$h$ ensure that the differentiable structure extends smoothly up to~$O$~\cite[p.~20]{Petersen}. For the basic definitions and properties of model manifolds we refer to \cite{GW, PRS}. 
Typical examples of model manifolds are the three constant-curvature space forms of dimension~$N$: namely, the Euclidean space~$\RN$, the hyperbolic space~$\HN$, and the sphere~$\SN$, which correspond to
\[
h(r) = \begin{cases}
		r &\mbox{if $\mathcal{M} = \RN$},\\
		\sinh r &\mbox{if $\mathcal{M} = \HN$},\\
		\sin r &\mbox{if $\mathcal{M} = \SN$}.
	\end{cases}
\]
Here we have $S = +\infty$ in the first two cases, and $S = \pi$ in the third.
To be precise, in the third case we should write $\mathcal{M} = \SN \setminus \{\, q \,\}$, where $q$ is the point on $\SN$ antipodal to~$O$. Slightly abusing the notation, we write $\mathcal{M} = \SN$ for simplicity.
For more details on the space forms and their metric we refer the reader to \cite{C}.

\medskip

Henceforth we consider a model manifold~$\mathcal{M}$ and a bounded $C^1$-domain $\Omega$ in $\mathcal{M}$ containing~$O$. In that setting we 
discuss overdetermined elliptic problems for the rotationally invariant Poisson equation
\begin{equation}\label{Poisson}
-\Delta u=f(r),
\end{equation}
where $\Delta = \Delta_{g_\mathcal{M}}$ is the Laplace-Beltrami operator (we drop the index $_{g_\mathcal{M}}$ for shortness).
We first discuss Serrin-type problems for the Poisson equation, and then consider the overdetermined exterior Bernoulli problem.
In both cases we will present several generalizations of results obtained so far in the literature that imply rigidity of the domain and symmetry of the solutions.

\medskip

\subsection{Serrin-type rigidity results for the Poisson equation}

In the first part of the paper, we consider the overdetermined elliptic problem
\begin{equation}\label{pr}
	\begin{cases}
		-\Delta {u}=f(r) &\mbox{in $\Omega$},\\
		u=\varphi(r) &\mbox{on $\partial \Omega$},\\
		u_\nu = \nd(r) &\mbox{on $\partial \Omega$},
	\end{cases}
\end{equation}
where $\nu$ is the unit exterior normal vector on~$\partial \Omega$ (with respect to the metric $g_\mathcal{M}$),  
the shorthand $u_\nu$ represents the normal derivative $\partial u / \partial \nu$, and $f$, $\varphi$ and $\nd$ are given functions defined on the interval $(0,S)$.

\medskip

The overdetermined problem \eqref{pr} with $f$ constant, $\varphi \equiv 0$ and $\nd$ constant in the space forms
$\RN$, $\HN$ or $\SN$, are special cases covered by the works done by Serrin, Molzon and Kumaresan-Prajapat \cite{serrin, Molzon:1988, molzon, KuPr}. 
On model manifolds whose Ricci curvature satisfy $\mathop{\rm Ric}_{\mathcal{M}} \geq (N-1)\, k\, g_{\mathcal{M}}$ for some constant $k$, 
we note that the corresponding nonlinear problem \eqref{pr0} has been investigated 
with the specific nonlinearity $f(u)=1+Nku$, 
see \cite{AFM, FR, Roncoroni} (some compatibility conditions on $h$ are also required).

\medskip

Here, setting $B_R(O) = \{\, x \in \mathcal{M} : d(x,O) < R \,\}$, where $R \in (0,S)$ and $d(x,O)$ denotes the geodesic distance,  
we investigate the overdetermined problem \eqref{pr} under the hypotheses that:
\[
O \in \Omega \subset B_R(O) \hbox{ for some } R \in (0,S),
\]
\begin{equation} \label{eq:Ipotesi2_f}
f \in C^0((0,S)), \quad \int_0^r h(t)^{N-1} \, |f(t)| \, dt < +\infty\,
\quad r \in (0,S),
\end{equation}
\[
\varphi \in C^1((0,S)), \quad \nd \in C^0((0,S)).
\]
By a solution to  \eqref{pr},  we mean a function $u \in L^1(\Omega) \cap C^2(\Omega \setminus \{\, O \,\}) \cap C^1(\overline \Omega \setminus \{\, O \,\})$ which satisfies the equation in the distributional sense. Consequently, $u$ is also a classical solution in $\Omega \setminus \{\, O \,\}$. However, the left-hand side~$f$ and the solution~$u$ are allowed to have a mild singularity at~$O$.
To state our result, we introduce the following function 
\begin{equation}\label{v}
v(r) :=
{} - \frac{1}{h(r)^{N-1}} \int_0^r h(t)^{N-1} \, f(t) \, dt,\,
\quad r \in (0,S).
\end{equation}
As discussed in Proposition~\ref{derivative}, this function represents the normal derivative on~$\partial B_r(O)$ of any radial solution to the rotationally invariant Poisson equation~(\ref{Poisson}) in~$\mathcal{M}$, and furthermore we have the following property:

\medskip

\begin{quote}
The overdetermined problem\/~\eqref{pr} admits a radial solution in the geodesic ball $\Omega=B_{r_0}(O)$ if and only if $v-\nd$ vanishes at $r_0$.
\end{quote}

\medskip

\noindent Our first rigidity  theorem can be seen as a Serrin-type result and reads as follows:
\begin{theorem}\label{th1}\pdfbookmark[2]{Theorem~\ref{th1}}{Theorem1}
Assume that \/~\eqref{pr} admits a solution~$u$, and suppose that
\begin{equation}\label{varphi}
\varphi' - v \geq 0 \qquad \mbox{in $(0,S)$.}
\end{equation}
Then, $v - \nd$ has a zero in $(0,S)$. Furthermore, the solution~$u$ is radial if the following 
condition is satisfied:
\begin{enumerate}
\def\labelenumi{\rm(\arabic{enumi})}
\item 
$v-\nd \leq 0$ in\/ $(0,r_0]$ for every zero\/ $r_0$ of the function $v - \nd$.
\end{enumerate}
The same conclusion holds if instead of\/~$(1)$ we assume
\begin{enumerate}
\def\labelenumi{\rm(\arabic{enumi})}\setcounter{enumi}{1}
\item
$v-\nd \geq 0$ in\/ $[r_0, S)$ for every zero\/ $r_0$ of the function $v - \nd$,
and the domain~$\Omega$ satisfies the interior sphere condition at each point of the boundary.
\end{enumerate}

In either case, the domain\/ $\Omega$ must be a geodesic ball centered at\/~$O$ if at least one of the following additional conditions holds:
\begin{enumerate}\itemsep=0pt plus 3pt
\item[\rm(a)] The inequality in \eqref{varphi} is strict;
\item[\rm(b)] The difference $v - \nd$ is monotone, and $\nd \ne 0$ in $(0,S)$;
\item[\rm(c)] The difference $v - \nd$ is monotone, and the function $\int_0^r h(t)^{N-1} \, f(t) \, dt $ does not vanish identically in any interval $(a,b) \subset (0,S)$.
\end{enumerate}
\end{theorem}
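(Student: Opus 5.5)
We compare $u$ with a radial function built from $v$, and apply the maximum principle at the closest and farthest boundary points of $\partial\Omega$ from~$O$.

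\textbf{Radial comparison function.} For $r_1\in(0,S)$ set
\[
w(x)=\varphi(r_1)+\int_{r_1}^{r(x)} v(t)\,dt .
\]
By Proposition~\ref{derivative}, $-\Delta w=f(r)$ in the distributional sense, with a mild singularity at $O$ of the same type allowed for $u$. Hence $z=u-w$ is harmonic in $\Omega\setminus\{O\}$, and I expect the singularity at $O$ to be removable, since both $u$ and $w$ satisfy the equation distributionally across~$O$. On $\partial\Omega$ we have
\[
z=\varphi(r)-\varphi(r_1)-\int_{r_1}^{r}v\,dt=\int_{r_1}^{r}(\varphi'-v)\,dt .
\]
By \eqref{varphi}, $z$ on $\partial\Omega$ is nondecreasing in $r$. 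Let $r_m=\min_{\partial\Omega} r$ and $r_M=\max_{\partial\Omega} r$.

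\textbf{Step 1: $v-\nd$ has a zero.} Take $r_1=r_M$. Then $z\le 0$ on $\partial\Omega$, so $z\le 0$ in $\Omega$, with $z=0$ at a farthest point $x_M$. At $x_M$, $\partial\Omega$ is tangent to $\partial B_{r_M}$ from inside, so $\nu=\partial_r$ there. Therefore $0\le z_\nu(x_M)=\nd(r_M)-v(r_M)$.

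Symmetrically, take $r_1=r_m$. Then $z\ge0$ on $\partial\Omega$, so $z\ge 0$ in $\Omega$, with $z=0$ at a nearest point $x_m$, where again $\nu=\partial_r$. This gives $\nd(r_m)-v(r_m)\le 0$.

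Thus $v-\nd$ changes sign (weakly) on $[r_m,r_M]$, and by continuity it has a zero.

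\textbf{Step 2: radial symmetry.} Suppose first that $z$ is not identically zero.
\begin{itemize}
\item Under (1): at $r_M$ we have $v-\nd\le 0$. If $v(r_M)-\nd(r_M)<0$, then by the sign information near $r_M$ there is a zero $r_0\ge r_M$ with $v-\nd\le0$ on $(0,r_0]$. In particular $v-\nd\le0$ at $r_m$, which together with Step~1 forces $v(r_m)=\nd(r_m)$.
\item At the nearest point $x_m$, Hopf's lemma holds without any interior sphere condition, because the exterior ball $B_{r_m}$ condition is automatic there. It gives $z_\nu(x_m)<0$ if $z\not\equiv0$. This contradicts $z_\nu(x_m)=\nd(r_m)-v(r_m)=0$.
\item Under (2), argue symmetrically at $x_M$. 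Hopf's lemma there needs the interior sphere condition, which is exactly why that hypothesis is added.
\end{itemize}
Hence $z\equiv 0$, i.e.\ $u=w$ is radial.

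The delicate bookkeeping is choosing which zero $r_0$ and which extreme point to use so that (1) or (2) gives $v=\nd$ exactly at $r_m$ (respectively $r_M$). One may instead apply the argument with $r_1$ replaced appropriately, and use that a zero lies in $[r_m,r_M]$.

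\textbf{Step 3: $\Omega$ is a ball.} With $u$ radial, $u_\nu=v(r)\,\langle\partial_r,\nu\rangle=\nd(r)$ on $\partial\Omega$, and $u=\varphi(r)$.
\begin{itemize}
\item Under (a): both $u(x)=\varphi(r_1)+\int_{r_1}^{r}v$ and $\varphi(r)$ hold on $\partial\Omega$. So $\int_{r_1}^{r}(\varphi'-v)=0$, and strict monotonicity of this integral forces $r$ to be constant on $\partial\Omega$.
\item Under (b) or (c): $\langle\partial_r,\nu\rangle\le1$ gives $|\nd|\le|v|$. Combined with monotonicity of $v-\nd$ and the zero structure, this forces $\langle\partial_r,\nu\rangle=1$ wherever $v\neq0$. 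Then $\partial\Omega$ is locally a sphere $\partial B_r$, and connectedness gives a single radius.
\item Condition $\nd\neq0$, respectively $\int_0^r h^{N-1}f\not\equiv0$ on any interval, ensures $v\neq0$ on a dense set, so the previous conclusion applies on all of $\partial\Omega$.
\end{itemize}

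The main obstacle I expect is Step~3(b)/(c), namely controlling points where $\langle\partial_r,\nu\rangle<1$ via monotonicity, together with the removability at~$O$.
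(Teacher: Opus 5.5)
Your overall route is the paper's. Your $w$ with $r_1=r_m$ and $r_1=r_M$ is exactly its $u_1,u_2$, and Steps~1, 2 and~3(a) reproduce its maximum-principle/Hopf argument. Removability at $O$ is indeed Weyl's lemma, as in the paper, and $B_{r_m}(O)$ is an \emph{interior}, not exterior, ball at $x_m$. In Step~2, the claim that there is a zero $r_0\ge r_M$ is unjustified, since no zero need exist beyond $r_M$. Your closing remark contains the correct fix: take the zero $r_0\in[r_m,r_M]$ from Step~1. Then (1) gives $(v-\nd)(r_m)\le 0$, hence $(v-\nd)(r_m)=0$, and symmetrically (2) gives $(v-\nd)(r_M)=0$.

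The genuine gap is Step~3(b)/(c). The bound $|\nd|\le|v|$ is not what is needed; you must derive $v\equiv\nd$ on $[r_m,r_M]$. Once $u$ is radial, it coincides with both radial comparison functions. Since $\nu=\partial_r$ at both $x_m$ and $x_M$, you get $(v-\nd)(r_m)=(v-\nd)(r_M)=0$, and monotonicity forces $v-\nd\equiv 0$ on $[r_m,r_M]$. Then $u_\nu=\nd$ becomes $v(r)\,(\langle\partial_r,\nu\rangle-1)=0$ on $\partial\Omega$, which settles (b).

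For (c), your density argument fails. Density of $\{v\neq 0\}$ in $(0,S)$ does not make $\{x\in\partial\Omega : v(r(x))\neq 0\}$ dense in $\partial\Omega$, because an open piece of $\partial\Omega$ could lie on a single sphere $\partial B_\rho(O)$ with $v(\rho)=0$. What is missing is the interval argument behind the paper's neighbourhood $N(P)$. On the open subset of $\partial\Omega$ where $\nu\neq\pm\partial_r$, one has $\langle\partial_r,\nu\rangle<1$, hence $v(r(x))=0$. There the restriction $r|_{\partial\Omega}$ also has nonzero differential, so it maps any nonempty open piece onto a set containing an interval $(a,b)$. On that interval $v$, and therefore $\int_0^r h^{N-1}f$, vanishes, contradicting (c). Working with $\nu\neq\pm\partial_r$ rather than $\nu\neq\partial_r$ is what guarantees $r$ is nonconstant there. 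Hence $\nu=\pm\partial_r$ everywhere and $r$ is locally constant on $\partial\Omega$. Every component of $\partial\Omega$ is then a geodesic sphere centred at $O$, and since $\Omega$ is connected and contains $O$, it is a single geodesic ball.
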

\noindent When we say \textit{monotone} in conditions (b) and~(c) above, as well as in Theorem~\ref{th2}, we mean either non-increasing or non-decreasing.

\medskip

Notice that Theorem~\ref{th1} is not linked to positivity of the solution $u$ (while in the results obtained by the moving plane method such condition is required).

\medskip 

An immediate consequence of Theorem~\ref{th1} is that if~\eqref{varphi} holds, and  $v - \nd$ has no zeros in $(0,S)$, then the overdetermined problem~\eqref{pr} is unsolvable.
We also remark that either assumption~$(1)$ or~$(2)$ is needed, otherwise the result fails and counterexamples can be constructed as in \cite[Section~5]{greco_AMSA}.
For instance, let 
\[
\Omega = \left\{\, (x,y) \in \mathbb R^2 : \frac{\, x^2 \,}{\, a^2 \,} + \frac{\, y^2 \,}{\, b^2 \,} < 1 \,\right\}
\] 
for some $a > b > 0$, and let $\varphi \equiv 0$. 
Choosing then any H\"older continuous, positive function $f \in C^{0,\alpha}([0,a])$, by \eqref{v} we have  
$v(r) \le 0$ and therefore \eqref{varphi} holds true. The Poisson equation~(\ref{Poisson}) with the Dirichlet boundary condition $u = 0$ on~$\partial \Omega$ has a unique solution $u \in C^{2,\alpha}(\overline \Omega)$ (see~\cite[Theorem~6.14]{GT}), which is positive by the strong maximum principle. Since the problem is invariant under the change of variable $x \mapsto -x$, as well as $y \mapsto -y$, and by uniqueness, we must have $u(x,y) = u(\pm x, \pm y)$ for any choice of the sign in front of the variables. But then, for every $r \in [b,a]$ we may define $\nd (r) = -u_\nu(x,y) > 0$ by choosing any point $(x,y) \in \partial \Omega$ satisfying $x^2 + y^2 = r^2$, and the definition is well posed because it is independent of the choice of the specific point (there are from $2$ to $4$ admissible choices). Thus, we have constructed a solvable instance of the overdetermined problem~\eqref{pr} in the non-radial domain~$\Omega$. In this case, the function~$\nd(r)$ must fail to satisfy the requirements of Theorem~\ref{th1} because the conclusion does not hold.

\medskip\vspace{0pt plus 10pt}

Theorem~\ref{th1} extends several results obtained for very specific functions~$f$, $\varphi$, $\nd$:
 \begin{enumerate} \itemsep=4pt
\item[$\bullet$]
When $\mathcal{M} = \RN$, $f \equiv 1$ and $\varphi \equiv 0$ we recover a result obtained in~\cite{greco_AMSA} (see our Corollary~\ref{basic}). Again in the case when $\mathcal{M} = \RN$ we also note that the assumptions of Theorem~\ref{th1} hold for functions satisfying 
$$
 f (r) \geq (N+ \alpha) \, r^{\alpha} , 
 \quad
 \varphi \equiv 0, 
 \quad
 \nd (r) \geq - r^{\beta},
 \quad
 \beta - \alpha - 1 \geq 0,
 \quad
 \alpha > -N.
$$
Hence our theorem extends two results obtained by Onodera~\cite[Propositions 2.1 and~2.2]{onodera}  who treated the cases
$f (r) = (N+ \alpha) \, r^{\alpha} $ and $ \nd (r) = - r^{\beta}$ using a foliation argument.
\item[$\bullet$]
In the case when $\mathcal{M} = \SN$, instead, Theorem~\ref{th1} must be compared with~\cite[Theorem~2]{molzon} (see also \cite[Theorem~1.1]{CiVe_2017}) that deals with 
a domain~$\Omega$ of class $C^2$  included in the upper hemisphere~$\SN_+$ but not required \textit{a priori} to contain the north pole~$O$. 
For the specific functions $f(r) = N\cos r$, $\varphi \equiv 0$, and $\nd$ constant, the author proves that if the overdetermined problem~\eqref{pr} has a solution $u \in C^2(\overline \Omega)$, then $\Omega$ must be a geodesic ball centered at~$O$. The same conclusion is reached for any $\Omega \subset \SN$ provided that $u$ is positive (an extension to conical domains is obtained in~\cite{Lee-Seo_MN}). Note that under such conditions we have $v(r) = -\sin r$ and therefore we may use Theorem~\ref{th1} to deal with the case where $u$ is not supposed to be positive, provided $\SN_+ \subset \Omega$: see our Corollary~\ref{further}.
\item[$\bullet$]
In general, as soon as there exists an isometry $F \colon (\mathcal{M},g_\mathcal{M}) \to (\mathcal{M},g_\mathcal{M})$ such that $F(O) \ne O$, the conclusion of Theorem~\ref{th1} (that $\Omega$~is a geodesic ball centered at~$O$) cannot hold for constant $f,\varphi,\kappa$ because problem~(\ref{pr}) becomes invariant under isometries: this is the case when $\mathcal{M}$ is a space form, which was considered by Molzon and by Kumaresan and Prajapat in the mentioned papers.
\end{enumerate}

\medskip
Related results have been obtained for other elliptic operators. See, for instance: \cite{greco_AMSA}~for the constant mean curvature, as well as the constant Gauss curvature equations; \cite{GrPi}~for the $p$-Laplacian $\Delta_p$ with $p \in (1,+\infty)$; \cite{greco_Symmetry}~for the infinity-Laplacian; \cite{CGM}~for the normalized $p$-Laplacian; \cite{GrMe}~for the Finsler $p$-Laplacian; \cite{Greco-Mascia}~for the fractional Laplacian. In~\cite{CiVe_2016}, the authors deal with the equation $-\Delta_{p\, } u = 1$ in any Riemannian manifold such that the Laplacian $\Delta d(x)$ of the distance function $d(x)$ from $x$ to~$O$ is also a function of $d(x)$ only. The paper that started the series is~\cite{HePhPr}, which however refers to a capacity problem (see Section~\ref{proof2}). For an existence theory see \cite{BHS} and~\cite{onodera}. The equation $-\Delta u = \delta$ (Dirac's delta function) is considered in~\cite[Theorem~1]{BaSh}.

\medskip

\subsection{Rigidity results for the Bernoulli problem}

In the second part of our paper we consider the exterior Bernoulli problem, which is a capacity problem related to \eqref{pr}. More precisely, we fix a ball $B_{R_0}(O)$ and, assuming $\overline B_{R_0}(O) \subset \Omega$, we consider the Poisson equation only in the set difference $\Omega \setminus \overline B_{R_0}(O)$. Solutions are required to vanish on~$\partial B_{R_0}(O)$. We end up with the overdetermined problem
\begin{equation}\label{pr2}
	\begin{cases}
		-\Delta u=f(r) &\mbox{in $\Omega \setminus \overline B_{R_0}(O)$},\\
		u=0 &\mbox{on $\partial B_{R_0}(O)$},\\
		u=\varphi(r) &\mbox{on $\partial \Omega$},\\
		u_\nu = \nd (r) &\mbox{on $\partial \Omega$},
	\end{cases}
\end{equation}
where, as before, $f$, $\varphi$ and $\nd$ are three prescribed functions. To be specific, we assume:
\begin{equation} \label{eq:Ipotesi3}
\overline B_{R_0}(O) \subset \Omega \subset B_R(O) \hbox{ for some } R_0,R \in (0,S),
\end{equation}
\[
f \in C^0([R_0,S)), \quad \varphi \in C^1((R_0,S)), \quad \nd \in C^0((R_0,S)) .
\]
In this case, we deal with classical solutions $u \in C^2(\Omega \setminus \overline B_{R_0}) \cap C^1(\overline \Omega \setminus \overline B_{R_0}) \cap C^0(\overline \Omega \setminus B_{R_0})$.
To state our result, for $r \in (R_0,S)$ and $c \in \mathbb R$ we introduce the function 
\begin{equation} \label{w}
w(r,c) =
\frac1{\, h(r)^{N-1} \, \int_{R_0}^r \frac{ds}{h(s)^{N-1}} \,}
\left( c - 
 \int_{R_0}^r  \int_{s}^r \Big( \frac{h(t)}{h(s)} \Big)^{N-1} \, f(t) \, dt \, ds\right) .
\end{equation}

\noindent As explained in Proposition~\ref{derivative_0}, this function represents the normal derivative on~$\partial B_r(O)$ of any radial solution to the equation~(\ref{Poisson}) in $\mathcal{M} \setminus \overline B_{R_0}(O)$, 
vanishing on~$\partial B_{R_0} (O)$ and attaining the value~$c$ on~$\partial B_r(O)$. Furthermore we have the following property:

\medskip

\begin{quote}
The overdetermined problem\/~\eqref{pr2} admits a radial solution in the geodesic annulus $B_{r_0}(O) \setminus \overline B_{R_0}(O)$ if and only if $\nd(r_0) = w(r_0,\varphi(r_0))$.
\end{quote}

\medskip

\noindent Our second rigidity result is the following:
\begin{theorem}\label{th2}\pdfbookmark[2]{Theorem~\ref{th2}}{Theorem2}
Assume that \eqref{pr2} admits a solution~$u$, and suppose that\/ $\varphi$ satisfies for all $r \in (R_0,S)$ the linear differential inequality
\begin{equation}\label{varphi2}
\varphi'(r)  - w(r,\varphi(r)) \ge 0.
\end{equation}
Then, the difference $w(\cdot, \varphi(\cdot)) - \nd(\cdot)$ has a zero in $(R_0,S)$. Furthermore, the solution~$u$ is radial if the following 
condition is satisfied:
\begin{enumerate}
\def\labelenumi{\rm(\arabic{enumi})}
\item 
$w(\cdot, \varphi(\cdot)) - \nd(\cdot) \le 0$ in $(R_0,r_0]$ for every zero~$r_0$ of the function $w(\cdot, \varphi(\cdot)) - \nd(\cdot)$.
\end{enumerate}
The same conclusion holds if instead of\/~$(1)$ we assume
\begin{enumerate}\setcounter{enumi}{1}
\def\labelenumi{\rm(\arabic{enumi})}
\item  
$w(\cdot, \varphi(\cdot)) - \nd(\cdot)  \ge 0$ in $[r_0,S)$ for every zero~$r_0$ of the function $w(\cdot, \varphi(\cdot)) - \nd(\cdot)$,
and the domain\/ $\Omega$ satisfies the interior sphere condition at each point of the boundary.
\end{enumerate}
In either case, the domain\/ $\Omega$ must be a geodesic ball if at least one of the following additional conditions holds:
\begin{enumerate}\itemsep=0pt plus 3pt
\item[\rm(a)] The inequality in \eqref{varphi2} is strict;
\item[\rm(b)] The difference $w(\cdot, \varphi(\cdot)) - \nd(\cdot)$ is monotone, and $\nd \ne 0$ in $(R_0,S)$;
\item[\rm(c)] The difference $w(\cdot, \varphi(\cdot)) - \nd(\cdot)$ is monotone, and the function $f$ does not vanish identically in any interval $(a,b) \subset (R_0,S)$.
\end{enumerate}
\end{theorem}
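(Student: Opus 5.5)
We follow the comparison strategy of \cite{greco_AMSA}, adapted to the annular setting. Let $r_1 = \min_{\partial\Omega} r$ and $r_2 = \max_{\partial\Omega} r$, so that $R_0 < r_1 \le r_2 < R$, and pick points $x_1, x_2 \in \partial\Omega$ realizing these values. For $\rho \in (R_0,S)$ let $U_\rho$ denote the radial solution of $-\Delta U = f(r)$ in $B_\rho(O)\setminus\overline B_{R_0}(O)$ with $U = 0$ on $\partial B_{R_0}(O)$ and $U = \varphi(\rho)$ on $\partial B_\rho(O)$. By Proposition~\ref{derivative_0}, its radial derivative at $r=\rho$ equals $w(\rho,\varphi(\rho))$. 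Solving the radial ODE explicitly, $U_\rho$ is defined for all $r \in [R_0,S)$.

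\textbf{Step 1: comparison functions.} Condition \eqref{varphi2} says exactly that $\frac{d}{d\rho}\bigl(\varphi(\rho)-U_\rho(\rho)\bigr)\ge0$ when the family is viewed along the diagonal. More usefully, for fixed $\rho$ the function $r\mapsto \varphi(r)-U_\rho(r)$ vanishes at $r=\rho$. Its derivative is $\varphi'(r)-U_\rho'(r)$, and comparing with the radial solution through $(r,\varphi(r))$ together with \eqref{varphi2} gives the sign: $\varphi \ge U_\rho$ for $r\ge\rho$ and $\varphi\le U_\rho$ for $r\le\rho$. The justification is that two radial solutions vanishing at $R_0$ differ by a multiple of the harmonic profile $\int_{R_0}^r h^{1-N}$, which is increasing; this is the step where a careful computation is needed.

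Taking $\rho=r_1$, we get $u=\varphi\ge U_{r_1}$ on $\partial\Omega$ and $u=0=U_{r_1}$ on $\partial B_{R_0}$. The difference $u-U_{r_1}$ is harmonic in $\Omega\setminus\overline B_{R_0}$, so the maximum principle gives $u\ge U_{r_1}$ there, with equality at $x_1$. Hence $u_\nu(x_1)\le \partial_r U_{r_1}(r_1)$, because $\nu=\partial_r$ at $x_1$ (the ball $B_{r_1}$ is inside and touches $\partial\Omega$). This yields $\nd(r_1)\le w(r_1,\varphi(r_1))$. Symmetrically, with $\rho=r_2$ we have $u\le U_{r_2}$, touching at $x_2$, so $\nd(r_2)\ge w(r_2,\varphi(r_2))$. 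By the intermediate value theorem, $w(\cdot,\varphi(\cdot))-\nd$ has a zero in $[r_1,r_2]$.

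\textbf{Step 2: radial symmetry.} Under (1), let $r_0$ be the zero in $[r_1,r_2]$ produced by Step~1. From $w-\nd\le0$ on $(R_0,r_0]$ and $w(r_1)-\nd(r_1)\ge0$ we get $w(r_1)=\nd(r_1)$, so at $x_1$ the functions $u$ and $U_{r_1}$ have equal normal derivatives. Since $u\ge U_{r_1}$ and the interior ball $B_{r_1}$ sits inside $\Omega$ at $x_1$, Hopf's lemma applied to $u-U_{r_1}$ forces $u\equiv U_{r_1}$, so $u$ is radial. Case (2) is the mirror argument at $x_2$, using $u\le U_{r_2}$. Here Hopf's lemma needs an interior ball of $\Omega$ at $x_2$, which is where the interior sphere hypothesis enters.

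\textbf{Step 3: $\Omega$ is a ball.} Once $u=U$ is radial, every boundary point satisfies $\varphi(r)=U(r)$, and if $r_1<r_2$ then $\partial\Omega$ meets every sphere with $r\in[r_1,r_2]$. Under (a), strict inequality in \eqref{varphi2} makes $\varphi-U$ strictly monotone, so it cannot vanish on an interval; hence $r_1=r_2$. Under (b) and (c), we use $u_\nu=U'(r)\langle\partial_r,\nu\rangle=\nd(r)$. The main obstacle is to show that on a nondegenerate interval the monotonicity of $w-\nd$, combined with $\varphi=U$ and $\varphi'=U'$ there, forces $w=U'=\nd$. One then concludes that $\langle\partial_r,\nu\rangle=1$ at boundary points where $\nd\neq0$, so the boundary is tangent to spheres, contradicting $r_1<r_2$. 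In case (c), the nonvanishing of $f$ excludes the degenerate situation $U'\equiv0$ on an interval, using $(h^{N-1}U')'=-h^{N-1}f$.
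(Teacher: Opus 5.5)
Your Steps 1 and 2 follow the paper's Parts 1 and 2. Your $U_{r_1},U_{r_2}$ are its $u_1,u_2$, and you use the maximum principle, the comparison of normal derivatives at $x_1,x_2$, the intermediate value theorem and Hopf's lemma at $x_1$ (resp.\ $x_2$) exactly as the paper does.

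The one difference is how you get $\varphi\ge U_\rho$ for $r\ge\rho$ and $\varphi\le U_\rho$ for $r\le\rho$. The paper uses a general comparison lemma for first-order ODEs. Your observation that radial solutions vanishing on $\partial B_{R_0}(O)$ differ by multiples of $\psi(r)=\int_{R_0}^r h^{1-N}$ gives a more elementary proof once written out. Let $U^0$ be the radial solution with $U^0(R_0)=(U^0)'(R_0)=0$, and set $a(r)=(\varphi(r)-U^0(r))/\psi(r)$. Then $a'(r)=\bigl(\varphi'(r)-w(r,\varphi(r))\bigr)/\psi(r)\ge0$ and $\varphi(r)-U_\rho(r)=\bigl(a(r)-a(\rho)\bigr)\psi(r)$. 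What matters is $\psi>0$, not that $\psi$ increases. Your first sentence of Step 1 is a slip: $\varphi(\rho)-U_\rho(\rho)\equiv0$, and what \eqref{varphi2} says is $\partial_r(\varphi-U_\rho)\ge0$ at $r=\rho$.

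The gap is in Step 3 under (b) and (c). You leave unproved the step you call the main obstacle, and the route you suggest (using $\varphi=U$, $\varphi'=U'$ on a whole interval) does not close it. The missing point is that $w(\cdot,\varphi(\cdot))-\nd$ vanishes at both endpoints $r_1$ and $r_2$. At $x_i$ the normal is radial, and $u=U$ is radial with $U(r_i)=\varphi(r_i)$, so $\nd(r_i)=u_\nu(x_i)=U'(r_i)=w(r_i,\varphi(r_i))$ by \eqref{key}. Monotonicity then gives $w(r,\varphi(r))=\nd(r)$ on $[r_1,r_2]$, hence $w(r,\varphi(r))\,(\langle\partial_r,\nu\rangle-1)=0$ on $\partial\Omega$.

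Under (b) this gives $\nu=\partial_r$ on all of $\partial\Omega$. So every component of $\partial\Omega$ is a sphere about $O$, and connectedness of $\Omega\ni O$ leaves only one. Under (c) you still have to explain why $U'=0$ on an interval. If $\nu\neq\partial_r$ somewhere, some open piece of $\partial\Omega$ has $\langle\partial_r,\nu\rangle<1$ with $r$ non-constant on it. Otherwise the component of $\partial\Omega$ through such a point would be a whole sphere $\partial B_\rho(O)$; since $O\in\Omega$, this forces $\Omega\subset B_\rho(O)$ and $\nu=\partial_r$ there. The radii of that piece fill an interval on which $w(r,\varphi(r))=U'(r)=0$. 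Then $(h^{N-1}U')'=-h^{N-1}f$ gives $f\equiv0$ there, contradicting (c).

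Two smaller points. Your claim that $\partial\Omega$ meets every sphere $\partial B_r(O)$ with $r\in[r_1,r_2]$ fails when $\Omega$ has holes. Also, $\varphi-U$ need not be monotone on all of $(R_0,S)$. You need neither: under (a), the single equality $\varphi(r_2)=U_{r_1}(r_2)$ already contradicts $\varphi-U_{r_1}=(a(r)-a(r_1))\psi>0$ for $r>r_1$, since $a$ is then strictly increasing.
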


A special case when the assumptions are satisfied occurs when $\mathcal{M} = \RN$, $f \equiv 0$, $\varphi$ is a positive constant and $\nd$ is a constant. Such a case is also a corollary of a theorem by Reichel \cite{reichel} (see \cite{HePhPr,Sirakov} as well), where the author generalized the moving plane method to annular domains for the overdetermined elliptic problems
\[
	\begin{cases}
		-\Delta u = f(u) &\mbox{in $\Omega \setminus \overline{B}_{R_0}(O)$},\\
		u=0&\mbox{on $\partial B_{R_0}(O)$},\\
		u=a>0&\mbox{on $\partial \Omega$},\\
		u_\nu = \textnormal{const.} &\mbox{on $\partial \Omega$},
	\end{cases}
\]
under the assumption that $f$ is the sum of a Lipschitz continuous and a non-decreasing function. Up to our knowledge, \cite{HePhPr,reichel,Sirakov} have not been generalized to $\HN$ and $\SN$ but we believe that the mentioned result should still hold in such cases, again under the hypothesis that the annular domain stays in the upper hemisphere if the ambient space is $\SN$. Nontrivial solutions to overdetermined elliptic problems in annular domains of $\RN$ with locally constant Dirichlet and Neumann conditions have been constructed in \cite{ABM, EFRS, KS}.

\medskip

Another special case of Theorem \ref{th2} is a result of~\cite{greco_MMAS}, where the case $\mathcal{M} = \RN$, $f \equiv 0$, $\varphi$ is a positive constant is considered under the assumption that the product $r^{N-1} \, \nd(r)$ is non-decreasing (see also our Corollary~\ref{annulus}).

\medskip

We remark that when $\mathcal{M} = \RN$ or $\HN$ the function $h(r)$ is increasing, which allows to apply Theorem \ref{th2} to a large class of functions $\nd$: see Lemma~\ref{explanation} for an explanation. The same remark holds in $\SN$ if the domain $\Omega \setminus \overline B_{R_0}(O)$ stays in the upper hemisphere, i.e. when $h(r)=\sin (r)$ and $0 < r < \pi/2$. If the ambient manifold is $\SN$ but we do not know a priori that the domain $\Omega \setminus \overline B_{R_0}(O)$ stays in the upper hemisphere, then the hypothesis on $\nd$ is more demanding (for example constant functions $\nd$ are not suitable when $f \equiv 0$). Again, this is according to the fact we mentioned before that in $\SN$ rigidity results via the moving plane method can be obtained only for domains contained in a hemisphere.

\medskip

The literature on the Bernoulli problem in $\RN$ is quite extended. The symmetry theorem we present here is related to~\cite{HePhPr}, whose results were refined in~\cite{greco_MMAS} (see also~\cite{reichel}). In particular, we show that such results hold in any model manifold and for every continuous~$f$. Related results for the quasilinear elliptic equation $-\mathop{\rm div}(g(|\nabla u|) \, \nabla u) = f(r,u)$ are found in~\cite[Theorem~1.3]{greco_JAM}. Approximate radial symmetry (stability) was investigated in~\cite{HePh} (see also \cite[Theorem~5.1]{greco_JAM}).
Further overdetermined problems in annular domains on space forms are considered in~\cite{Lee-Seo_JIA}. 


\section{Serrin-type rigidity result} \label{sec:rel_dic_prob}

This section is devoted to the proof of Theorem \ref{th1}, and some of its consequences.

\subsection{A Dirichlet problem in the ball}

Before giving the proof of Theorem~\ref{th1} we collect some preliminary results on the radial solutions for the Dirichlet problem
\begin{equation}\label{Dp}
	\begin{cases}
		-\Delta {u}=f(r) &\mbox{in $B_R(O)$},\\
		u=c &\mbox{on $\partial B_R(O)$},
	\end{cases}
\end{equation}
where $c \in \R$. To this end, we observe that the Laplace-Beltrami operator $\Delta$ associated to the metric~\eqref{eq:RiemannianMetric} may be written as
\[
\partial_r^2 + (N-1) \, \frac{h'(r)}{h(r)} \, \partial_r + \frac{1}{h(r)^2}\, \Delta_{\S^{N-1}},
\quad
\hbox{ for } r \in (0,S) ,
\]
where $\Delta_{\S^{N-1}}$ is the Laplace-Beltrami operator on~$\S^{N-1}$.
In particular, in the class of radial solutions, the PDE in \eqref{Dp} reads as
\begin{equation}\label{ode}
u'' + (N-1) \, \frac{h'(r)}{h(r)} \, u' = -f(r),
\quad
\hbox{ for } r \in (0,R) ,
\end{equation}
which may be equivalently rewritten as
\begin{equation}\label{rewritten}
-\Big( h(r)^{N - 1} \, u'(r) \Big)' = h(r)^{N - 1} \, f(r).
\end{equation}
With this in mind, some properties of radial solutions are summarized in the following proposition.
\begin{proposition}\label{derivative}
Consider\/ $f$ satisfying\/~{\rm(\ref{eq:Ipotesi2_f})} and\/ $c \in \mathbb R$.
The Dirichlet problem~\eqref{Dp} is solvable  (in the sense of distributions). Its solution $u = u_R(r)$ belongs to $C^2(\overline B_R(O) \setminus \{\, O \,\})$, is unique, radial, and satisfies~\eqref{ode}.
Its derivative\/ $v(r) = u_R'(r)$ is determined by~\eqref{v} which is independent of $R$, and $u_R (r)$ itself is given by
\begin{equation} \label{eq:UItself}
        u_R (r) = c + \int_r^{R} \int_0^s  \Big( \frac{h(t)}{h(s)} \Big)^{N-1}\, f(t) \, dt\, ds\,
        \quad
       \hbox{ for } r \in (0,R).
\end{equation}
Furthermore, $u_R(r)$ admits an extension satisfying~\eqref{ode} for $r \in (0,S)$.
\end{proposition}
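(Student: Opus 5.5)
We split the argument into existence, regularity, uniqueness and radiality, using the divergence form \eqref{rewritten} throughout.

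\emph{Existence.} We write down the candidate explicitly. Define $u_R$ by \eqref{eq:UItself}. The inner integral $\int_0^s h(t)^{N-1} f(t)\,dt$ is finite and continuous in $s$ by \eqref{eq:Ipotesi2_f}, so $u_R\in C^1((0,R])$ and $u_R' = v$ with $v$ as in \eqref{v}. Then $h^{N-1}u_R' = -\int_0^r h^{N-1} f\,dt$ is $C^1$ on $(0,S)$, because $f$ is continuous there. Its derivative is $-h^{N-1}f$, which is \eqref{rewritten}, equivalently \eqref{ode}. Hence $u_R\in C^2$ away from $O$, and $u_R(R)=c$. The formula makes sense for every $r\in(0,S)$, which gives the claimed extension. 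Since $v$ does not depend on $R$ or $c$, the statement about $v$ follows.

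\emph{Distributional equation across $O$.} We must check that $u_R\in L^1(B_R)$ and that $\int u_R\,\Delta\psi = -\int f\psi$ for every $\psi\in C_c^\infty(B_R)$. Near $0$ we have $|v(s)|\le h(s)^{1-N}\int_0^s h^{N-1}|f|$. Integrability of $u_R$ against $h^{N-1}dr$ then follows from Fubini and the bound $\int_0^\varepsilon h(s)^{1-N}\int_0^s h^{N-1}|f|\,dt\,ds$; one could alternatively use that $u_R$ is a primitive of $v$ with $h^{N-1}|v|$ bounded near $0$.

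For the identity, we excise the ball $B_\varepsilon$ and apply Green's formula on $B_R\setminus B_\varepsilon$. This produces boundary terms on $\partial B_\varepsilon$:
\[
\omega\, h(\varepsilon)^{N-1}\bigl(u_R(\varepsilon)\,\partial_r\psi - \psi\, v(\varepsilon)\bigr),
\]
where $\omega$ denotes the measure of $\mathbb S^{N-1}$. The term $h(\varepsilon)^{N-1}v(\varepsilon)=-\int_0^\varepsilon h^{N-1}f\,dt$ tends to $0$. For the other term, we need $h(\varepsilon)^{N-1}u_R(\varepsilon)\to0$, at least along a sequence. This holds along some sequence $\varepsilon_k\to0$ because $u_R h^{N-1}\in L^1(0,R)$, and $\partial_r\psi$ is bounded. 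I expect this to be the main technical point.

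\emph{Uniqueness and radiality.} Suppose $u_1,u_2$ are two solutions in the stated class. Their difference $z$ is a distributional harmonic function in $B_R$, so Weyl's lemma on the Riemannian manifold makes $z$ smooth, including at $O$. We interpret the boundary condition classically, since solutions are $C^0$ up to $\partial B_R$. The maximum principle then gives $z\equiv0$. Finally, the problem is invariant under rotations of $\mathbb S^{N-1}$, which are isometries fixing $O$, so uniqueness forces the solution to be radial. It therefore coincides with $u_R$.

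Singular removability at $O$ requires $u\in L^1$ and the distributional formulation, both of which are built into the definition of solution. This is why the argument goes through Weyl's lemma rather than through capacity arguments.
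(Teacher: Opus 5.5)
Your overall route is the paper's: explicit formula \eqref{eq:UItself}, verification of \eqref{rewritten} for $r>0$, control of the singularity at $O$ to get the distributional equation, uniqueness via Weyl's lemma and the maximum principle, and radiality via rotational invariance. The gap is in the step you yourself flag as the crux. You assert that $h(\varepsilon_k)^{N-1}u_R(\varepsilon_k)\to0$ along some sequence \emph{because} $u_R h^{N-1}\in L^1(0,R)$. That implication is false. Integrability of a function $g$ on $(0,R)$ gives no information on $\liminf_{\varepsilon\to0^+}|g(\varepsilon)|$ (take $g\equiv1$). You only use that $\partial_r\psi$ is bounded, so the boundary term $\int_{\partial B_\varepsilon}u_R\,\partial_r\psi\,d\sigma$ is not shown to vanish. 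The distributional identity across $O$, which is the delicate part of the existence claim, is therefore left unproved.

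Your integrability sketch is also off as literally written. The quantity $\int_0^\varepsilon h(s)^{1-N}\int_0^s h(t)^{N-1}|f(t)|\,dt\,ds$ can diverge under \eqref{eq:Ipotesi2_f}, e.g.\ for $h(r)=r$, $N=3$, $f(t)=t^{-5/2}$. The correct Fubini computation carries the extra weight $\int_0^s h^{N-1}$. Your second suggestion, using that $h^{N-1}|v|$ is bounded, is the one that works.

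What is missing is a pointwise bound, and this is exactly what the paper provides. Since $h(s)^{N-1}v(s)=-\int_0^s h^{N-1}f\,dt\to0$ and $h(s)\sim s$, one gets $v(s)=o(s^{1-N})$. Integrating $u_R(r)=c-\int_r^R v(s)\,ds$ (de l'H\^opital) yields $u_R(r)=o(\log r)$ if $N=2$ and $u_R(r)=o(r^{2-N})$ if $N\ge3$. Hence $h(\varepsilon)^{N-1}u_R(\varepsilon)$ is $o(\varepsilon\log\varepsilon)$, resp.\ $o(\varepsilon)$, which tends to $0$ along the full limit. The same bounds give $u_R\in L^1(B_R(O))$.

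Alternatively, you can keep an integrability-type argument but exploit radiality. Since $u_R$ is radial, $\int_{\partial B_\varepsilon}u_R\,\partial_r\psi\,d\sigma=u_R(\varepsilon)\int_{B_\varepsilon}\Delta\psi\,dV=O(\varepsilon^N|u_R(\varepsilon)|)$. Integrability of $\varepsilon^{N-1}u_R(\varepsilon)$ near $0$ does force $\liminf_{\varepsilon\to0^+}\varepsilon^N|u_R(\varepsilon)|=0$. With either repair, the rest of your argument goes through as written.
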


\begin{proof}
Uniqueness follows from the comparison principle. Indeed, the difference of any two distributional solutions is harmonic in the sense of distributions, and therefore it is harmonic in the classical sense by Weyl-Caccioppoli's lemma: see \cite[pp.~4-5]{Caccioppoli} and \cite[Lemma~2]{Weyl}.
Uniqueness, together with the invariance under rotations of problem~\eqref{Dp} implies that any solution must be radial. Passing to polar coordinates, the equation~(\ref{Poisson}) for $r > 0$ becomes~\eqref{rewritten}. Since the function in~(\ref{eq:UItself}) satisfies
\[
        u_R(r) = c - \int_r^{R} v(s) \, ds\,
        \quad
       \hbox{ for } r \in (0,R),
\]
a straightforward computation shows that $u_R \in C^2((0,S))$ and satisfies~(\ref{rewritten}). In order to prove that $u = u_R(r)$ is a distributional solution of~(\ref{Poisson}), it is enough to bound its possible singularity at~$O$. To this aim, we first observe that
\[
\lim_{s \to 0^+} h(s)^{N - 1} \, v(s) = 0.
\]
This is an immediate consequence of the convergence of the integral in assumption~(\ref{eq:Ipotesi2_f}). Since $h'(0) = 1$ by assumption, we may write $v(s) = o(s^{1-N})$ as $s \to 0^+$. Consequently, using de l'H\^opital rule we check that
\[
u_R(r) =
\begin{cases}
o(\log r), &\mbox{if $N = 2$};
\\
\noalign{\medskip}
o(r^{2-N}), &\mbox{if $N \ge 3$}.
\end{cases}
\]
These bounds imply that $u$~is a distributional solution of~(\ref{Poisson}).
\end{proof}

\begin{remark}
Consider\/ $f$ satisfying\/~{\rm(\ref{eq:Ipotesi2_f})}, and let $\varphi$ and~$\nd$ be any real-valued functions over the interval $(0,S)$. By the preceding proposition we immediately see that the overdetermined problem \eqref{pr} has a radial solution in the geodesic ball $B_{r_0}(O)$ for some $r_0 \in (0,S)$ if and only if $(v-\nd)(r_0) = 0$.
\end{remark}

\subsection{Radial symmetry for Serrin-type problems}

In order to study the radial symmetry of solutions to the overdetermined problem \/~\eqref{pr}, we consider the two geodesic balls 
$B_{R_1}(O) \subset \Omega \subset B_{R_2}(O)$, whose radii are defined by
\begin{equation} \label{eq:Min-Max}
R_1 := \min_{\partial \Omega} r > 0\,, \qquad R_2 := \max_{\partial \Omega} r < S ,
\end{equation}
and introduce the unique solutions $u_1, u_2 \in C^2(\mathcal{M} \setminus \{\, O \,\})$ of
\begin{equation} \label{eq:Ostia}
   - \Delta u_i = f \hbox{ in } \mathcal{M}, 
   \quad
   u_i = u(P_i) = \varphi(R_i) \hbox{ on } \partial B_{R_i} (O), 
\end{equation}
where $P_i$ is any point on $\partial B_{R_i}(O) \cap \partial \Omega$ (see Proposition~\ref{derivative}). Since $u_1,u_2$ are radial, slightly abusing the notation we will consider them as functions of~$x$ as well as~$r$. We will make use of the following equivalence:
%
%
\begin{lemma} \label{lem:pasolini}
Let $u$ be a solution of\/~{\rm(\ref{Poisson})} in\/~$\Omega$ attaining the value~$\varphi$ at the boundary, and consider the solution $u_i$ of\/~\eqref{eq:Ostia}. Then
\begin{equation} \label{eq:MalTesta}
  u \hbox{ is radial} 
  \, \,  \, \Longleftrightarrow \, \, \, u  \equiv u_i \hbox{ in\/ $\Omega \setminus \{\, O \,\}$ for both\/ $i = 1,2$}. 
\end{equation}
\end{lemma}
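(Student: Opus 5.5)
The easy implication is ``$\Leftarrow$'': each $u_i$ is radial by Proposition~\ref{derivative}, so if $u\equiv u_i$ in $\Omega\setminus\{O\}$ for one (let alone both) $i$, then $u$ is radial. The substance is therefore in ``$\Rightarrow$'', and the plan is to identify a radial $u$ with $u_1$ and $u_2$ through the one-dimensional ODE and the boundary values at $P_1,P_2$.

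So suppose $u$ is radial, i.e.\ $u(x)=U(r(x))$ for some function $U$ on the set of radii $\{\,r(x):x\in\Omega\setminus\{O\}\,\}$. Since $O\in\Omega$ and $\Omega$ is connected with $B_{R_1}(O)\subset\Omega$, this set of radii is an interval $(0,R^*)$ with $R^*\ge R_1$. (Indeed, if $r_0$ were attained by $x_0\in\Omega$ while some $r<r_0$ were not attained, then $B_r(O)\subset\Omega$ would be excluded; connectedness of $\Omega$, the path from $O$ to $x_0$ and continuity of $r$ rule this out.) Because $u\in C^2(\Omega\setminus\{O\})$, the function $U$ is $C^2$ on $(0,R^*)$ and satisfies~\eqref{rewritten} there.

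Next I determine $U'$. Integrating~\eqref{rewritten} gives $h^{N-1}U'=-\int_0^r h^{N-1}f\,dt+C$. Here $u$ is only a distributional solution on $\Omega$ and may be singular at $O$, so $C=0$ does not come for free. I would test the equation against radial cut-offs concentrated near $O$, or equivalently apply the divergence theorem on $B_\varepsilon(O)$ in distributional form. A nonzero $C$ produces a term $C\,h^{1-N}$ in $U'$, which means $u$ contains a multiple of the Green function of $\Delta$ with pole at $O$, and then $-\Delta u$ has an extra term $C\,|\mathbb S^{N-1}|\,\delta_O$. Since $-\Delta u=f$ with no Dirac mass, we get $C=0$ and hence $U'=v$ on $(0,R^*)$. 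This is the main delicate step, because it uses exactly the $L^1$/distributional framework; I would check that $h^{2-N}$ (respectively $\log h$ when $N=2$) lies in $L^1$ near $O$, so the pairing is legitimate.

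Finally I match the constants. By Proposition~\ref{derivative}, $u_i'=v$ on $(0,S)$ as well, so $U-u_i$ is constant on $(0,R^*)$. To pin the constant down, note that $u\in C^1(\overline\Omega\setminus\{O\})$ and $P_i\in\partial\Omega$ is a limit of interior points $x_k\in\Omega$ with $r(x_k)\to R_i$; in particular $R^*\ge R_2$ and $U$ extends continuously to $R_i$. Then $U(R_i)=u(P_i)=\varphi(R_i)=u_i(R_i)$, so the constant is zero and $u\equiv u_i$ in $\Omega\setminus\{O\}$ for $i=1,2$. In this last step only the boundary continuity of $u$ is needed.
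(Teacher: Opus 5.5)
Your proof is correct, but it takes a different route from the paper's.

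The paper stays at the PDE level. The difference $u-u_1$ is harmonic in the distributional sense, hence (Weyl--Caccioppoli) smooth up to $O$. Being radial and vanishing at $P_1$, it vanishes on $\partial B_{R_1}(O)$, hence in $B_{R_1}(O)$ by the maximum principle, and then in all of $\Omega$ by unique continuation. The identity $u_1\equiv u_2$ follows in the same way from the radial harmonic function $u_1-u_2$ on $B_{R_2}(O)$, which vanishes at $P_2$.

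You instead reduce to the radial ODE. You integrate $-(h^{N-1}U')'=h^{N-1}f$ and exclude the singular mode $C\,h^{1-N}$ by testing the distributional equation near $O$. This step is rigorous: with a radial cut-off $\psi=\eta(r)$, $\eta\equiv 1$ near $0$, one gets exactly $\int u\,\Delta\psi=|\mathbb S^{N-1}|\,C-\int f\psi$, so $C=0$. You then use that the radii of points of $\Omega\setminus\{O\}$ form an interval. Hence $U'=v=u_i'$ there, and the constants $U-u_i$ are fixed to zero by continuity at $P_1$ and $P_2$.

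Both arguments use the singularity at $O$ in the same way. Your Dirac-mass step is the ODE version of the paper's ``$u-u_1$ extends smoothly to $O$''. You could equally quote Weyl's lemma and note that $C\int_r^{R_1}h^{1-N}\,ds$ is unbounded as $r\to0^+$ unless $C=0$.

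Your route dispenses with the maximum principle and unique continuation, and it treats $u_1,u_2$ symmetrically. The paper's route is shorter and needs no information about the set of radii.

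Your parenthetical justification of the interval property is garbled, but the claim is immediate: $r(\Omega)$ is the continuous image of a connected set containing $O$, and in fact equals $[0,R_2)$.
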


\begin{proof}
Let us first point out that if a radial harmonic function $\omega$ defined in a ball $B_{R} (O)$ for some $R>0$ has a zero at some $P \in \partial B_{R} (O)$, it must vanish on $\partial B_R (O)$ 
and by the maximum principle it must vanish in all of~$B_R(O)$.  
\medskip \\
Now assume that $u$ is radial. 
By applying this previous observation with the radial harmonic function $\omega = u - u_1$ which admits a smooth extension up to the origin), we have
$u \equiv u_1$ in $B_{R_1} (O) \setminus \{\, O \,\}$ (since $(u - u_1) (P_1) = 0$). This implies $u \equiv u_1$ in $\overline \Omega \setminus \{\, O \,\}$ (unique continuation property). 
Furthermore, observing that $(u_1 - u_2)(P_2) = (u - u_2 ) (P_2) = 0$ it follows that the harmonic function $\omega = u_1 - u_2$ (which again admits a smooth extension up to the origin) vanishes in the entire ball $B_{R_2} (O)$ 
and therefore in the domain $\Omega$.
\end{proof}

The second part of Theorem \ref{th1} will be derived from a more general fact, that is stated in the proposition that comes next.

\begin{proposition}\label{ball}
If\/ $u$ is a radial solution of\/ \eqref{pr} and either assumption\/~{\rm(a)}, or assumption\/~{\rm(b)}, or\/~{\rm(c)} in Theorem~\ref{th1} is satisfied, then\/ $\Omega$~must be a geodesic ball centered at~$O$.
\end{proposition}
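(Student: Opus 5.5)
Let $u$ be radial, so $u(x)=U(r)$ with $U$ a radial solution of \eqref{ode} on $(0,R_2]$. By Lemma~\ref{lem:pasolini}, $u\equiv u_1\equiv u_2$, and by Proposition~\ref{derivative} we have $U'=v$ on $(0,S)$. The plan is to argue by contradiction. Suppose $\Omega$ is not a geodesic ball centered at $O$. Then $R_1<R_2$, where $R_1,R_2$ are as in \eqref{eq:Min-Max}. Since $\partial\Omega$ is connected (or at least since $\Omega$ is a domain containing $O$, sitting between the two balls), the continuous function $r$ takes every value in $[R_1,R_2]$ on $\partial\Omega$. Hence for every $\rho\in[R_1,R_2]$ there is a point $P\in\partial\Omega$ with $r(P)=\rho$. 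At such a point the Dirichlet condition gives $U(\rho)=\varphi(\rho)$. Thus $U=\varphi$ on the whole interval $[R_1,R_2]$, and differentiating gives $\varphi'=v$ there. This contradicts (a), since there $\varphi'-v>0$, and that settles case (a).

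For (b) and (c), the Neumann condition is the additional information to use. At $P\in\partial\Omega$ with $r(P)=\rho$ we have $u_\nu=v(\rho)\,\langle\nabla r,\nu\rangle=\nd(\rho)$. The key observation is that $\partial\Omega$ contains points where $\nu=\nabla r$, for instance at $r=R_1$ and $r=R_2$ (tangency with the spheres), which gives $v=\nd$ at $R_1$ and $R_2$. Combined with the monotonicity of $v-\nd$, only these endpoints are needed. The difference $v-\nd$ vanishes at $R_1$ and at $R_2$ and is monotone, so it vanishes identically on $[R_1,R_2]$. Therefore $v=\nd$ on $[R_1,R_2]$.

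Next I use the full Neumann information on the interval. The set $A=\{\rho\in(R_1,R_2): \text{some } P\in\partial\Omega \text{ with } r(P)=\rho \text{ has } \langle\nabla r,\nu\rangle\neq 1\}$ has positive measure. Otherwise, almost every level of $r$ on $\partial\Omega$ would be crossed with $\nu=\nabla r$, and a $C^1$ boundary piece joining radius $R_1$ to radius $R_2$ would have to be tangent to spheres everywhere, which is impossible. I expect justifying this cleanly to be the main obstacle. My first attempt would be to pick a boundary arc $\gamma$ with $r\circ\gamma$ increasing from near $R_1$ to near $R_2$; where $\nu=\nabla r$ one has $(r\circ\gamma)'=0$, so by Sard-type or mean value reasoning $\langle\nabla r,\nu\rangle<1$ on a set of radii of positive measure. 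At such radii, $v\langle\nabla r,\nu\rangle=\nd=v$ forces $v(\rho)=0$, and hence $\nd(\rho)=0$. Under (b), this contradicts $\nd\neq0$ immediately. Under (c), $v=0$ holds on a set of positive measure; by continuity together with openness it holds on an interval. I would arrange this by choosing a point where $\langle\nabla r,\nu\rangle<1$, which is an open condition along $\partial\Omega$, so its image under $r$ contains an interval. On that interval $\int_0^r h^{N-1}f=-h^{N-1}v=0$, contradicting (c). The same openness argument also handles (b) directly without any measure considerations.
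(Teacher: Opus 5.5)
Your outline follows the paper's proof. For (a) you use that $\varphi-U$ is strictly increasing. For (b) and (c), tangency at $P_1,P_2$ gives $v=\nd$ at $R_1,R_2$, monotonicity gives $v\equiv\nd$ on $[R_1,R_2]$, and then $v(r)\,(\langle\nabla r,\nu\rangle-1)=0$ on $\partial\Omega$.

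The gap is topological. You use that $r$ takes every value of $[R_1,R_2]$ on $\partial\Omega$, and later a ``$C^1$ boundary piece joining radius $R_1$ to radius $R_2$''. But $\partial\Omega$ is not assumed connected, and your parenthetical justification is false: for $\Omega=B_3(O)\setminus\overline{B_{1/2}(p)}\subset\mathbb R^N$ with $|p|=1$ one has $r(\partial\Omega)=[1/2,3/2]\cup\{3\}$.

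In (a) this is harmless if you argue as the paper does. Since $\varphi-U$ is strictly increasing and vanishes at $R_1$ and at $R_2$ (at $P_1$, $P_2$), it follows that $R_1=R_2$, with no differentiation on the whole interval. In (b) and (c), however, everything hinges on deducing from $R_1<R_2$ that some point of $\partial\Omega$ has $\nu\neq\nabla r$. Your tangency argument only gives this on a component of $\partial\Omega$ along which $r$ is non-constant.

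The missing ingredient is the one the paper supplies:
\begin{itemize}
\item a component on which $r$ is constant (in particular, one on which $\nu=\nabla r$ everywhere) is a full geodesic sphere $\partial B_\rho(O)$;
\item if all components were such spheres, then $\Omega$, being connected and containing $O$, would coincide with the innermost ball, so $\partial\Omega$ would be a single sphere and $R_1=R_2$.
\end{itemize}
Hence $R_1<R_2$ yields a component $\Gamma$ on which $r|_\Gamma$ is non-constant.

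There is a second, smaller issue in (c). A point where $\langle\nabla r,\nu\rangle<1$ does not guarantee that the image under $r$ of a neighbourhood contains an interval. On an open patch of a sphere with $\nu=-\nabla r$ the inequality holds, yet $r$ is constant there; the paper is equally terse at this step. Choose instead $P\in\Gamma$ with $d(r|_\Gamma)_P\neq0$, i.e.\ $\nabla r$ not parallel to $\nu$. Such a point exists because $r|_\Gamma$ is non-constant on the connected $\Gamma$. Near $P$, $r|_\Gamma$ is a submersion and $\langle\nabla r,\nu\rangle<1$, so $v=0$ on an open interval $I\subset(R_1,R_2)$. This contradicts (c) via $\int_0^\rho h^{N-1}f=-h^{N-1}v$, and contradicts (b) since $\nd=v=0$ on $I$.

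You can drop the positive-measure detour: continuity alone would not turn a positive-measure zero set of $v$ into an interval. With these two repairs your argument becomes the paper's.
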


\begin{proof} 
Let $R_1, R_2$ be as in \eqref{eq:Min-Max} and $u_1, u_2$ be defined in \eqref{eq:Ostia}. The fact that $u$ is radial is equivalent by \eqref{eq:MalTesta} to the fact that $u \equiv u_1 \equiv u_2$ holds in~$\overline \Omega \setminus \{\, O \,\}$. Consequently we have $\varphi(r) = u_1(r)$ in $[R_1,R_2]$. Since $u_1' = v$, assumption~(a) allows the difference $\varphi - u_1$ to have at most one zero in $[R_1,R_2]$, and therefore $R_1 = R_2$ which shows that
$B_{R_1} (O) = \Omega = B_{R_2} (O)$.
\goodbreak
\medskip
Let us now treat the case when either assumption~(b) or~(c) holds. Observe first that showing $B_{R_1} (O) = \Omega = B_{R_2} (O)$ is equivalent
to proving $\nu \equiv \frac\partial{\partial r} $ along $\partial \Omega$. Indeed, if $\partial \Omega$ is the boundary of a geodesic ball centered at $O$, then Gauss Lemma~\cite[Theorem~6.9]{Lee} ensures that 
$\nu \equiv \frac{\partial}{\, \partial r \,} $.  Conversely, taking a local parametrization $\psi: U \subset \mathbb R^{N-1} \to \partial \Omega $, we derive the following information on the directional derivative of 
$r \circ \psi$ at each $y \in U$ in the direction $\xi \in \mathbb R^{N-1}$: 
$$
   D( r \circ \psi)_{(y)}  (\xi) = \frac{\partial}{\, \partial r \,} \cdot D \psi_{(y)} (\xi) = \nu \cdot D \psi_{(y)} (\xi) =  0.
$$
Hence, the distance function from $\partial \Omega$ to $O$ is locally constant. Therefore, each component of $\partial \Omega$ is a geodesic sphere, and we may write
$\partial \Omega = \textstyle\bigcup\limits_{j=1}^m \partial B_{r_j}(O)$
for convenient $r_m > \ldots > r_1 > 0$. Since $\Omega$ is connected and contains $O$, we must have $m = 1$. Consequently, $\Omega$ is a geodesic ball and so $B_{R_2} (O) = \Omega = B_{R_1} (O)$. 
\medskip \\
To prove the proposition, we recall that $u \equiv u_1 \equiv u_2$ is radial and so its gradient satisfies 
\begin{equation} \label{gradient1}
\nabla u(x) = v(r) \, \frac\partial{\, \partial r \,},
\quad
\hbox{for all } 
x \in \partial \Omega.
\end{equation} 
Of course, we also have
\begin{equation} \label{gradient2}
\nabla u(x) \cdot \nu = \nd(r),
\quad
\hbox{for all } 
x \in \partial \Omega.
\end{equation}
Furthermore, at each point $P_i$ we have $\nu = \frac{\partial}{\partial r}$, implying that the normal derivative of~$u$ coincides with the radial derivative, i.e.\
$( v - \nd) (R_i) = 0$. Consequently, $v - \nd$ vanishes identically in $[R_1,R_2]$ since this function is assumed to be monotone. Moreover, plugging into~(\ref{gradient2}) the expression of~$\nabla u$ given by~(\ref{gradient1}), we obtain
\begin{equation} \label{eq:Klug}
v(r) \Big(\frac\partial{\, \partial r \,} \cdot \nu - 1 \Big) =0
\quad
\hbox{for all } 
x \in \partial \Omega.
\end{equation}
Now, if (b) holds, then $v = \nd \ne 0$ in $[R_1,R_2]$ and from~(\ref{eq:Klug}) we get $\frac\partial{\, \partial r \,} \cdot \nu = 1$ on~$\partial \Omega$. Hence $\nu \equiv \frac\partial{\, \partial r \,}$, and the conclusion follows.

To prove the claim under assumption~(c) we argue by contradiction. Assume there exists $P \in \partial \Omega$ such that $(\nu - \frac\partial{\, \partial r \,} )(P) \not = 0$, then  we must have $R_1 < R_2$ and the inequality
 $\nu - \frac\partial{\partial r}  \not = 0$ must hold in a neighborhood $N(P) \subset \partial \Omega$, hence $\frac\partial{\, \partial r \,} \cdot \nu < 1$ there. Relation~\eqref{eq:Klug} shows that this can only occur if $v(r) = 0$ in $N(P)$.
 So by~(\ref{v}) there exists an interval $(a,b) \subset (R_1, R_2)$ such that
\[
\frac{1}{h(r)^{N-1}} \int_0^r h(t)^{N-1} \, f(t)\, dt = 0
\]
for all $r \in (a,b)$ in contradiction with assumption~(c). Hence, $\nu -  \frac\partial{\partial r} \equiv 0 $ which proves the claim.
\end{proof}

We are now in position to prove Theorem \ref{th1}.

\begin{proof}[Proof of Theorem \ref{th1}]
Consider the two functions $u_1, u_2$ defined in \eqref{eq:Ostia}.
Since $u_i' \equiv v$, the assumption \eqref{varphi} gives $\varphi' - u_i' = \varphi' - v \geq 0 $ and therefore 
$\varphi - u_i$ is increasing. Consequently, since $u_i(R_i)=\varphi(R_i)$ (by the definition of $u_i$), we must have 
$u_1 \leq u \leq u_2$
on the boundary $\partial \Omega$, and by the maximum principle this implies 
\begin{equation} \label{eq:u1-u-u2}
u_1 \leq u \leq u_2
\end{equation}
in $\Omega \setminus \{\, O \,\}$. 
Applying now the strong maximum principle together with \eqref{eq:u1-u-u2} and the equivalence noted in~\eqref{eq:MalTesta}, we are led to the following alternative: 
\begin{equation} \label{eq:Alternativa} 
\left\{
\begin{array}{l}
\hbox{either $u_1 \equiv u \equiv u_2 $ in $\Omega \setminus \{\, O \,\}$, and $u$ is radial};
\vspace{1mm} \\
\hbox{or $u_1 < u <  u_2 $ in $\Omega \setminus \{\, O \,\}$, and $u$ is not radial}.
\end{array}
\right.
\end{equation}

To continue with our analysis, we observe that the first of the inequalities~\eqref{eq:u1-u-u2} together with the fact that $(u - u_1) (P_1)= 0$ gives
\begin{equation}\label{first}
v(R_1) = u_1'(R_1) \geq u_\nu(P_1) = \nd (R_1).
\end{equation}
Similarly we also get
\begin{equation}\label{second}
v(R_2) = u_2'(R_2) \leq u_\nu(P_2) = \nd (R_2).
\end{equation}

\medskip

The relations \eqref{first} and \eqref{second} imply
\[
   ( v - \nd) (R_1) \geq 0, 
   \quad
    (v - \nd ) (R_2) \leq  0,
\]
and in particular we deduce that $(v - \nd)(r_0) = 0$ for some $r_0 \in [R_1, R_2]$. 

\medskip

Now, on account of the definition of the ball $B_{R_1}(O)$, the interior sphere condition is satisfied at the point $P_1$, 
so using the Hopf's Lemma we can rephrase the alternative \eqref{eq:Alternativa} as follows:

\begin{enumerate} \itemsep=5pt
\item[{\bf (i)}]
either $( v - \nd) (R_1) =0$, and in this case $u_1 \equiv u \equiv u_2$ in~$\overline \Omega \setminus \{\, O \,\}$;
\item[{\bf (ii)}]
or $(v - \nd) (R_1) > 0 \geq (v - \nd) (R_2)$ (and $u$ is not radial).
\end{enumerate} 

\medskip
Under the assumption (1), the second case cannot occur. Thus, $u \equiv u_1 \equiv u_2$, and so the solution $u$ is radial in $\Omega \setminus \{\, O \,\}$. 

\medskip

Under the further requirement that  the domain $\Omega$ satisfies the interior sphere condition at each point of the boundary $\partial \Omega$, we can apply Hopf's Lemma at both points $P_1, P_2$, 
and by taking into account \eqref{eq:Alternativa}, we are led to the alternative: 
 \begin{enumerate}   \itemsep=4pt
\item[{\bf (i)}]
either $( v - \nd) (R_1) = ( v - \nd) (R_2) = 0$, and in this case $u_1 \equiv u \equiv u_2$ in~$\overline \Omega \setminus \{\, O \,\}$;
\item[{\bf (ii)}]
or $(v - \nd) (R_1) > 0 >  (v -\nd) (R_2)$ (and $u$ is not radial).
\end{enumerate} 
\medskip

Under assumption (2), the second case cannot hold and we deduce as before that $u \equiv u_1 \equiv u_2$, so $u$ is radial.
\medskip

The second part of Theorem \ref{th1} is an immediate consequence of Proposition~\ref{ball}.
\end{proof}

\subsection{Applications}

In the case when $\mathcal{M} = \RN$, the next result follows by combining Theorem~2.1 of~\cite{greco_AMSA} with Example~1 in Section~3 of that reference. More generally, we have:
\begin{corollary}\label{basic}
Let\/ $\Omega \subset \mathcal{M}$ be a $C^1$ domain containing~$O$ and included in $B_R(O)$ for some $R < S$. Suppose that the ratio $\nd (r)/h(r)$ is monotone non-increasing in $r \in (0,+\infty)$. If the overdetermined problem
\begin{equation}\label{special}
	\begin{cases}
		-\Delta u=N \, h'(r) &\mbox{in $\Omega$},\\
		u=0 &\mbox{on $\partial \Omega$},\\
		u_\nu = \nd(r) &\mbox{on $\partial \Omega$}
	\end{cases}
\end{equation}
has a (classical) solution $u \in C^2(\Omega) \cap C^1(\overline \Omega)$, then\/ $u$ is radial and\/ $\Omega$ is a geodesic ball centered at~$O$.
\end{corollary}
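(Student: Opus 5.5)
Our plan is to apply Theorem~\ref{th1} with $f(r) = N\,h'(r)$ and $\varphi \equiv 0$, and then to check its hypotheses. First we compute $v$ from~\eqref{v}. Since $h(t)^{N-1} N h'(t) = \bigl(h(t)^N\bigr)'$ and $h(0)=0$, we get
\[
v(r) = -\frac{1}{h(r)^{N-1}}\, h(r)^N = -h(r).
\]
The integrability condition~\eqref{eq:Ipotesi2_f} clearly holds because $f$ is continuous on $[0,S)$. A classical solution $u\in C^2(\Omega)\cap C^1(\overline\Omega)$ is in particular a solution in the sense required there. Condition~\eqref{varphi} reads $\varphi' - v = h(r) > 0$ on $(0,S)$, so it holds, and in fact strictly. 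Hence condition~(a) of Theorem~\ref{th1} is available, and once radial symmetry is obtained, $\Omega$ is a geodesic ball centered at~$O$.

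It therefore remains to verify condition~(1) of Theorem~\ref{th1}: $v-\nd\le 0$ on $(0,r_0]$ whenever $(v-\nd)(r_0)=0$. We write
\[
v - \nd = -h(r)\Bigl(1 + \frac{\nd(r)}{h(r)}\Bigr),
\]
and since $h>0$ on $(0,S)$ the sign of $v-\nd$ is opposite to the sign of $g(r) := 1 + \nd(r)/h(r)$. By hypothesis $g$ is non-increasing. If $(v-\nd)(r_0)=0$, then $g(r_0)=0$, so $g(r)\ge g(r_0)=0$ for $r\le r_0$. Thus $v-\nd\le 0$ on $(0,r_0]$, which is exactly condition~(1).

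Theorem~\ref{th1} (under~(1), with no interior sphere condition needed) then gives that $u$ is radial, and condition~(a) gives that $\Omega$ is a geodesic ball centered at~$O$. The only mildly delicate point is the bookkeeping of the monotonicity direction and sign. The stated interval $(0,+\infty)$ for the monotonicity should be read as $(0,S)$, or as the range of $r$ actually met in $\overline\Omega$; since only $r\le R_2<S$ matters in the proof of Theorem~\ref{th1}, this causes no difficulty.
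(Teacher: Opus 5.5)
Your proposal is correct and follows the paper's proof essentially step for step. You specialize Theorem~\ref{th1} to $f = N h'$ and $\varphi \equiv 0$, compute $v = -h$ so that \eqref{varphi} holds strictly (which gives condition (a)), and verify condition (1) by writing $v - \nd = -h\,(1 + \nd/h)$ and using that $\nd/h$ is non-increasing.
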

\begin{proof}
Problem~(\ref{special}) is a particular instance of Problem~(\ref{pr}) with 
$f (r) \allowbreak = \allowbreak N \allowbreak \, h'(r)$ and $\varphi \equiv 0$. Furthermore, in this case the definition~\eqref{v} gives $v(r) = -h(r)$. 
Hence,  $ \varphi' (r) \allowbreak - \allowbreak v(r) >0 $ which establishes that both \eqref{varphi} and the stronger assumption (a) of Theorem~\ref{th1} hold.
To conclude, it remains to check the assumption (1) of Theorem~\ref{th1}.  Writing
\[
v(r) - \nd (r) = h(r) \, \Big( - 1 - \frac{\, \nd (r) \,}{h(r)} \Big),
\]
the hypothesis that the ratio $\nd (r)/h(r)$ is monotone non-increasing implies that the term in brackets is non-decreasing, hence it is non positive on $(0,r_0]$ for each zero $r_0$  of $v - \nd$.
Therefore Theorem~\ref{th1} can be applied and gives the conclusion that $u$ is radial on a domain $\Omega$ that must be a geodesic ball centered at~$O$.
\end{proof}

Note that Problem~\eqref{special} with $\nd(r) := \nd_0$ a negative constant is not covered by our Theorem~\ref{th1} as long as $h'(r) > 0$ since in that case
$v - \nd = - h(r) - \nd_0$ does not satisfy hypothesis (1) nor~(2).
By contrast, if $h'(r) < 0$ in an interval $(R_0,R)$ with $0 < R_0 < R < S$, Theorem~\ref{th1} allows to derive the following result, which is new even on the sphere $\SN$ for 
 domains~$\Omega$ containing the upper hemisphere~$\SN_+$.
To see this, it is useful to make the following observation:

\begin{remark} \label{rem:Beijing}
If in Theorem \ref{th1} we consider $C^1$ domains $\Omega$ satisfying
$B_{R_0} (O) \subset \Omega \subset B_R(O)$ for fixed geodesic balls with radii $R_0 < R < S$, then the proof goes through without modifications 
provided that $\varphi, \nd$ are just defined and satisfy the assumptions on the interval $[R_0, R]$ instead of $(0, S)$.
\end{remark}

\begin{corollary}\label{further}
Suppose that\/ $h'(r) < 0$ in an interval $(R_0,R) \subset \subset (0,S)$, and that $(R_1, \allowbreak R_2) \subset (R_0,R)$, where $R_1,R_2$ are given by~\eqref{eq:Min-Max}.
Consider a constant $\kappa_0 \in \mathbb R$ and a function $\varphi \in C^1 ( [R_0, R] )$ such that $\varphi'(r) \ge -h(r)$ for all $r \in [R_0,R]$. If the overdetermined problem
\[
	\begin{cases}
		-\Delta u=N \, h'(r) &\mbox{in $\Omega$},\\
		u=\varphi(r) &\mbox{on $\partial \Omega$},\\
		u_\nu = \kappa_0 &\mbox{on $\partial \Omega$}
	\end{cases}
\]
has a (classical) solution $u \in C^2(\Omega) \cap C^1(\overline \Omega)$, then\/ $u$ is radial and\/ $\Omega$ is a geodesic ball centered at $O$.
\end{corollary}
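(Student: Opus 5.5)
We apply Theorem~\ref{th1}, in the localized form allowed by Remark~\ref{rem:Beijing}, with $f(r) = N\,h'(r)$, the given $\varphi$, and $\nd \equiv \kappa_0$. The aim is to check \eqref{varphi}, one of the hypotheses (1) or (2), and one of (a), (b), (c).

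\emph{Computing $v$.} Since $h(0)=0$, we have
\[
\int_0^r h(t)^{N-1}\,N h'(t)\,dt = h(r)^N ,
\]
so \eqref{v} gives $v(r) = -h(r)$. The integrability condition \eqref{eq:Ipotesi2_f} holds trivially because $f$ is smooth. Hypothesis \eqref{varphi} becomes $\varphi' + h \ge 0$, which is exactly the assumption $\varphi'(r)\ge -h(r)$ on $[R_0,R]$.

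\emph{Checking (1).} Here $v - \nd = -h(r) - \kappa_0$. On $(R_0,R)$ we have $h'<0$, so $-h-\kappa_0$ is strictly increasing there. If $r_0\in[R_0,R]$ is a zero, then $v-\nd<0$ on $[R_0,r_0)$ and vanishes at $r_0$. This is hypothesis (1), restricted to the interval $[R_0,R]$ as Remark~\ref{rem:Beijing} permits.

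One point needs care: the zero $r_0$ produced in the proof of Theorem~\ref{th1} lies in $[R_1,R_2]\subset[R_0,R]$. The use of (1) in that proof only excludes alternative (ii), namely $(v-\nd)(R_1)>0\ge(v-\nd)(R_2)$. Monotonicity on $[R_1,R_2]$ excludes this directly, since an increasing function cannot be positive at $R_1$ and nonpositive at $R_2>R_1$. Hence $u$ is radial.

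\emph{Conclusion.} Monotonicity of $v-\nd$ holds. For the domain I would invoke (c): $\int_0^r h^{N-1}f\,dt = h(r)^N>0$ for $r>0$, so it does not vanish on any interval. Proposition~\ref{ball} then yields that $\Omega$ is a geodesic ball centered at $O$. Alternatively, (a) works whenever $\varphi' > -h$, but only the non-strict inequality is assumed, so (c) is the right choice.

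\emph{Expected obstacle.} The main subtlety is justifying the localization. The comparison functions $u_1,u_2$ are defined on all of $\mathcal{M}$ and do not depend on $\varphi,\nd$ outside $[R_1,R_2]$, so the comparison $u_1\le u\le u_2$ only uses $\varphi'-v\ge0$ on $[R_1,R_2]$. Likewise, Proposition~\ref{ball} only uses monotonicity on $[R_1,R_2]$. I would state this explicitly, following Remark~\ref{rem:Beijing}. Finally, $f = N h'$ is bounded near $O$, so the classical solution class $C^2(\Omega)\cap C^1(\overline\Omega)$ is contained in the solution class required by Theorem~\ref{th1}.
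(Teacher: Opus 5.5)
Your proposal is correct and follows the paper's proof essentially verbatim. Like the paper, you localize Theorem~\ref{th1} via Remark~\ref{rem:Beijing}, compute $v=-h$, observe that $v-\nd=-h-\kappa_0$ is strictly increasing on $[R_0,R]$, and conclude by case (1)-(c). Your check of (c) through $\int_0^r h^{N-1}f\,dt=h(r)^N>0$ matches how condition (c) is actually stated, whereas the paper phrases it more loosely in terms of where $f$ vanishes; your remarks on why the localization is harmless also spell out what the paper leaves to that remark.
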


\begin{proof}
Since $(R_1, \allowbreak R_2) \subset (R_0,R)$, as emphasized in Remark~\ref{rem:Beijing} the conclusion of Theorem~\ref{th1} holds 
if $\varphi$ and $\nd$ satisfy each stated hypothesis only on the interval $[R_0,R]$.

We define $f(r) = N \, h'(r)$ for $r \in (0,S)$, and from~(\ref{v}) we obtain $v(r) = -h(r)$. Hence (\ref{varphi}) holds true by assumption. 
Define also $\nd(r) = \kappa_0$ for $r \in [R_0,R]$, so $v(r) - \nd (r) = - h(r) - \kappa_0$ in $[R_0,R]$. Such a function is strictly increasing in $[R_0,R]$, and $f(r)$ may vanish at the endpoints only. Hence the conclusion follows immediately from Theorem~\ref{th1}, case~(1)-(c).
\end{proof}

Note that the constant $\kappa_0$ in Corollary~\ref{further} may be replaced by any function $\nd (r)$ such that  $- h(r) - \nd(r)$ is non-decreasing in $[R_0,R]$. 

\section{Rigidity result for the Bernoulli problem}
\label{proof2}

This section is devoted to the proof of our second result, Theorem \ref{th2}, and some of its consequences.

\subsection{A Dirichlet problem in the annulus and a comparison result}

We start by highlighting some properties of the (radial) solutions of the Dirichlet problem
\begin{equation}\label{Dp2}
	\begin{cases}
		-\Delta {u}=f(r) &\mbox{in $B_R(O) \setminus \overline B_{R_0}(O)$},\\
		u=0 &\mbox{on $\partial B_{R_0}(O)$},\\
		u=c &\mbox{on $\partial B_R(O)$} ,
	\end{cases}
\end{equation}
where $c$ is a constant.
The following proposition is analogous to Proposition~\ref{derivative}.

\begin{proposition}\label{derivative_0}
Consider\/ $f \in C^0([R_0,S))$ and\/ $c \in \mathbb R$.
The Dirichlet problem~\eqref{Dp2} is solvable in $C^2(\overline B_R(O) \setminus B_{R_0}(O))$ and its solution $u = u_{R,c}(r)$ is unique, radial, and satisfies~{\rm(\ref{ode})} 
for $r \in [R_0,R]$. Its derivative is given by
\begin{align}\label{relation}
u_{R,c}'(r) &=
\frac1{\, h(r)^{N-1} \, \int_{R_0}^R \frac{ds}{h(s)^{N-1}} \,}
\left( c -
 \int_{R_0}^R  \int_{s}^r \Big( \frac{h(t)}{h(s)} \Big)^{N-1} \, f(t) \, dt \, ds\right) .
\end{align}
Furthermore, $u_{R,c}(r)$ admits an extension satisfying ~\eqref{ode} for $r \in [R_0,S)$ and the linear differential equation
\begin{equation}\label{key}
u_{R,c}'(r) = w(r,u_{R,c}(r)) ,
\quad
\hbox{ for } r \in (R_0,S) ,
\end{equation}
where $w$ is the two-variable function in~{\rm(\ref{w})}.
\end{proposition}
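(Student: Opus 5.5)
The proof reduces to an explicit ODE integration, followed by a uniqueness argument that identifies the radial solution of~\eqref{Dp2} for one outer radius with the one for every other outer radius.

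\emph{Uniqueness and radial symmetry.} The annulus $B_R(O)\setminus\overline B_{R_0}(O)$ is bounded, $O$ is excluded, and we work with classical solutions. Hence the difference of two solutions of~\eqref{Dp2} is harmonic and vanishes on both boundary spheres. By the maximum principle it is identically zero, so the solution is unique. The problem is invariant under the rotations of $\mathcal M$ fixing~$O$, which are isometries of $g_\mathcal{M}$. Uniqueness therefore forces the solution to be radial, and it satisfies~\eqref{ode}, equivalently~\eqref{rewritten}, on $[R_0,R]$.

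\emph{Explicit construction.} Integrating~\eqref{rewritten} once from $R_0$ gives
\[
h(r)^{N-1}u'(r)=A-\int_{R_0}^r h(t)^{N-1}f(t)\,dt
\]
for some constant~$A$. Integrating again and using $u(R_0)=0$ gives
\[
u(r)=\int_{R_0}^r\frac{A}{h(s)^{N-1}}\,ds-\int_{R_0}^r\int_{R_0}^s\Big(\frac{h(t)}{h(s)}\Big)^{N-1}f(t)\,dt\,ds .
\]
Since $h>0$ on $[R_0,S)$, the quantity $I(R):=\int_{R_0}^R h(s)^{1-N}\,ds$ is positive. The condition $u(R)=c$ then determines $A$ uniquely as
\[
A=\frac{1}{I(R)}\Big(c+\int_{R_0}^R\int_{R_0}^s\Big(\frac{h(t)}{h(s)}\Big)^{N-1}f(t)\,dt\,ds\Big).
\]
Continuity of $f$ on $[R_0,S)$ shows that this $u$ is $C^2$ up to $r=R_0$ and $r=R$, and that it solves the PDE. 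This proves existence.

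\emph{Identification of the derivative.} Multiply $h(r)^{N-1}u'(r)=A-\int_{R_0}^r h^{N-1}f$ by $I(R)$. Then split $\int_{R_0}^r h(t)^{N-1}f(t)\,dt\cdot I(R)$ as $\int_{R_0}^R\int_{R_0}^r (h(t)/h(s))^{N-1}f(t)\,dt\,ds$. Combined with $\int_{R_0}^R\int_{R_0}^s$, this gives $-\int_{R_0}^R\int_s^r$, which is exactly~\eqref{relation}. I expect this bookkeeping of iterated integrals to be the only delicate point: one has to keep track of the orientation of $\int_s^r$ when $s>r$. It is nonetheless routine.

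\emph{Extension and~\eqref{key}.} The explicit formula, with the same constant~$A$, makes sense for all $r\in[R_0,S)$ and solves~\eqref{ode} there, so it provides the extension. For~\eqref{key}, fix $r\in(R_0,S)$. The extended $u_{R,c}$ solves~\eqref{Dp2} on the annulus $B_r(O)\setminus\overline B_{R_0}(O)$ with boundary value $u_{R,c}(r)$ on $\partial B_r(O)$. By uniqueness, $u_{R,c}=u_{r,u_{R,c}(r)}$ on $[R_0,r]$. Applying~\eqref{relation} with $R$ replaced by~$r$ and $c$ by $u_{R,c}(r)$, and evaluating at the point~$r$, yields exactly $w(r,u_{R,c}(r))$ as defined in~\eqref{w}.
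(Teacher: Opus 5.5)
Your proof is correct and follows the paper's argument: integrate \eqref{rewritten} twice, fix the constant $h(R_0)^{N-1}u'(R_0)$ from $u(R)=c$, and rearrange the iterated integrals to obtain \eqref{relation}. The only cosmetic difference is in deriving \eqref{key}: the paper re-solves \eqref{intermediate} for that constant in terms of $u(r)$, whereas you reach the same algebra by invoking uniqueness on $B_r(O)\setminus\overline B_{R_0}(O)$ and reusing \eqref{relation} with $R$ replaced by $r$; you also spell out the uniqueness and radiality argument that the paper leaves implicit.
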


\begin{proof}
The claim follows by integration of the ODE~\eqref{rewritten}. A first integration over the interval $[R_0,r]$ yields
\begin{equation}\label{integrale}
h(r)^{N-1} \, u'(r)
= h(R_0)^{N-1} \, u'(R_0) 
-
\int_{R_0}^r h(t)^{N-1} \, f(t) \, dt
\end{equation}
whence
\begin{align}
\label{intermediate}
u(r) &= \int_{R_0}^r u'(s) \, ds \\
 &=
h(R_0)^{N-1} \, u'(R_0) \, \int_{R_0}^r \frac{ds}{h(s)^{N-1}}
\nonumber
- \int_{R_0}^r
\frac{ds}{h(s)^{N-1}} \int_{R_0}^s h(t)^{N-1} \, f(t) \, dt 
.
\end{align}
If we now let $r = R$ in the last equality, since $u(R) = c$ we get the following representation of $h(R_0)^{N-1} \, u'(R_0)$:
\[
h(R_0)^{N-1} \, u'(R_0) =
\frac1{\, \int_{R_0}^R \frac{ds}{h(s)^{N-1}} \,}
\left( c + \int_{R_0}^R \frac{ds}{h(s)^{N-1}}
\int_{R_0}^s h(t)^{N-1} \, f(t) \, dt \right)
\]
and plugging this into~\eqref{integrale} 
\begin{align*}   
h(r)^{N-1} \, u'(r) =
\frac1{ \int_{R_0}^R \frac{ds}{h(s)^{N-1}}}
\Big( c &+
 \int_{R_0}^R  \int_{R_0}^s \Big( \frac{h(t)}{h(s)} \Big)^{N-1} \, f(t) \, dt \, ds
\\
\noalign{\medskip}
&-  \int_{R_0}^R  \int_{R_0}^r  \Big( \frac{h(t)}{h(s)} \Big)^{N-1} \, f(t) \, dt \, ds \Big)
\end{align*}
from which we obtain~\eqref{relation} for all $r \in [R_0,S)$. Finally, for $r \in (R_0, \allowbreak S)$ we deduce from~(\ref{intermediate})
\[
h(R_0)^{N-1} \, u'(R_0) =
\frac1{\, \int_{R_0}^r \frac{ds}{h(s)^{N-1}} \,}
\left( u(r) + \int_{R_0}^r \frac{ds}{h(s)^{N-1}}
\int_{R_0}^s h(t)^{N-1} \, f(t) \, dt \right)
\]
and plugging this into~\eqref{integrale} we arrive at~\eqref{key}.
\end{proof}

\begin{remark}
Consider\/ $f \in C^0([R_0,S))$, and any functions $\varphi$ and~$\nd$ on the interval $(R_0, \allowbreak S)$. By the preceding proposition we immediately see that the overdetermined problem \eqref{pr2} 
has a radial solution in the geodesic annulus $B_{r_0}(O) \setminus \overline B_{R_0}(O)$ for some $r_0 \in (R_0,S)$ if and only if\/ $w(r_0,\varphi(r_0)) - \nd(r_0) = 0$.
\end{remark}

\goodbreak

We also need a comparison principle between a super- and a subsolution for first order ordinary differential equations, that we state as follows:

\begin{lemma}
Consider $g \colon (a,b) \times \mathbb R \to \mathbb R$, and two  functions  $\overline y, \underline y$ differentiable in $(a,b)$ satisfying:
\begin{equation}\label{Cauchy-Comparison}
\overline y' (r) \ge g(r, \overline y (r)),
\qquad
\underline y'(r) \le g(r, \underline y(r) ).
\end{equation}
for $r \in (a,b)$.

{\bf (1)}
Assume  there exists a constant $L > 0$ such that for every\/ $r \in (a,b)$ and every\/ $y_1 < y_2$ we have
\begin{equation}\label{Lipschitz}
g(r,y_2) - g(r,y_1) \le L \, (y_2 - y_1).
\end{equation}
Then,  $ ( \overline y - \underline y) (r_0) < 0 $ at some $r_0 \in (a,b)$ implies that
 $\displaystyle \sup_{(a,r_0]}  ( \overline y - \underline y )  <  0$.

\medskip

{\bf (2)}
If instead $g$ satisfies for every\/ $r \in (a,b)$ and every\/ $y_1 < y_2$ the reverse inequality 
\begin{equation}\label{Lipschitz2}
g(r,y_2) - g(r,y_1) \ge L \, (y_2 - y_1),
\end{equation}
then the positivity $(\overline y- \underline y)(r_0) >  0 $
at some $r_0 \in (a,b)$ implies $\displaystyle \inf_{[r_0, b)} (\overline y -  \underline y)  >  0$.
\end{lemma}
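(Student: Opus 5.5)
Set $z = \overline y - \underline y$. Subtracting the two inequalities in \eqref{Cauchy-Comparison} gives
\[
z'(r) \ge g(r,\overline y(r)) - g(r,\underline y(r)), \qquad r \in (a,b),
\]
and the whole argument consists in bounding the right-hand side through \eqref{Lipschitz} or \eqref{Lipschitz2} wherever $z$ has a definite sign. This yields a linear differential inequality for $z$. I would then conclude with a Gr\"onwall-type argument based on the integrating factor $e^{-Lr}$.

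For part~(1), consider any $r$ at which $z(r) < 0$, that is, $\overline y(r) < \underline y(r)$. Applying \eqref{Lipschitz} with $y_1 = \overline y(r)$ and $y_2 = \underline y(r)$ gives
\[
g(r,\underline y) - g(r,\overline y) \le L\,(\underline y - \overline y),
\]
so $z' \ge -L(-z) = Lz$. Hence $(e^{-Lr} z)' \ge 0$ on every interval where $z < 0$. I would first argue that $z < 0$ on all of $(a, r_0]$. Let $r_1 = \inf\{\, r \in (a,r_0] : z < 0 \hbox{ on } [r, r_0] \,\}$ and suppose $r_1 > a$. The function $z$ is continuous, being differentiable, so $z(r_1) = 0$. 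On $(r_1, r_0]$ we have $e^{-Lr} z(r) \le e^{-Lr_0} z(r_0)$, because the function $e^{-Lr}z$ is non-decreasing there. Letting $r \to r_1^+$ gives $0 \le e^{-Lr_0} z(r_0) < 0$, a contradiction. Therefore $z < 0$ on $(a,r_0]$, and monotonicity then gives the quantitative bound
\[
z(r) \le e^{L(r - r_0)} z(r_0) \le e^{-L(r_0 - a)}\, z(r_0) \quad \hbox{on } (a, r_0].
\]
If $a$ is finite, this is a negative constant, and the supremum is strictly negative. If $a = -\infty$ (or more generally), the bound degenerates, and the statement must be read as $z < 0$ throughout $(a, r_0]$. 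In the intended application $a = R_0$ is finite, so the strict bound on the supremum holds.

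For part~(2), consider any $r$ at which $z(r) > 0$. Applying \eqref{Lipschitz2} with $y_1 = \underline y$ and $y_2 = \overline y$ gives $z' \ge g(r,\overline y) - g(r,\underline y) \ge Lz$, so again $(e^{-Lr}z)' \ge 0$ where $z > 0$. This time the inequality propagates positivity forward. If $z$ had a first zero $r_1 > r_0$, then on $[r_0, r_1)$ we would have $e^{-Lr}z(r) \ge e^{-Lr_0} z(r_0) > 0$, and passing to the limit $r \to r_1^-$ contradicts $z(r_1) = 0$. Hence $z(r) \ge e^{L(r - r_0)} z(r_0) \ge z(r_0) > 0$ for all $r \in [r_0, b)$, which gives $\inf_{[r_0,b)} z \ge z(r_0) > 0$.

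The only delicate point is the bootstrapping in the connectedness step. The differential inequality is available only where $z$ has a sign, so one must ensure that $z$ cannot reach zero. The monotonicity of $e^{-Lr}z$ on the maximal signed interval, combined with continuity of $z$ at an endpoint of that interval, rules this out, as shown above. Only differentiability of $\overline y$ and $\underline y$ is needed, not $C^1$, since the monotonicity of $e^{-Lr}z$ follows from the mean value theorem applied to a function that is differentiable with nonnegative derivative.
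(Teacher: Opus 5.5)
Your proof is correct and follows essentially the paper's route. The paper also runs a continuity argument on the maximal interval ending at $r_0$ (resp.\ starting at $r_0$) on which $\overline y-\underline y$ keeps its sign, and uses the one-sided Lipschitz bound $z'\ge Lz$ there. Your monotonicity of $e^{-Lr}z$ is the same estimate the paper gets by applying the mean value theorem to $\ln|z|$ in its two preliminary observations. Your explicit bound $z\le e^{-L(r_0-a)}z(r_0)$ also cleanly covers the case where the sign-preserving interval is all of $(a,r_0]$. Your remark that part~(1) needs $a$ finite is correct; the paper assumes this implicitly, writing $[a,r_0]$ and relying on $t_1-t_0<\infty$ in its first observation.
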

\begin{proof}
Before giving details, we remark that the proof relies on the following two observations
\begin{align}
\label{eq:Observation1} 
 \mbox{$z > 0$ and differentiable on $(t_0,t_1]$,}
 \quad\!
 \sup_{t \in (t_0, t_1) }      \frac{\, z'(t) \,}{z (t)}   < +\infty
 \quad  &\Rightarrow \quad\!
 \inf_{t \in (t_0, t_1]}  z (t) > 0,
\\
\noalign{\medskip}
\label{eq:Observation2} 
 \mbox{$z < 0$ and differentiable on $[t_0,t_1)$,}
 \quad\!
 \inf_{t \in (t_0, t_1) }      \frac{\, z'(t) \,}{z (t)}   > - \infty
 \quad  &\Rightarrow \quad\!
 \sup_{t \in [t_0, t_1) }  z (t) < 0 ,
\end{align}
which can respectively be justified by applying the mean value theorem to the function $\ln z $ on $[t_0 + \varepsilon, \, t_1]$ (respectively, $\ln (-z)$ on $[t_0, \, t_1 - \varepsilon]$).

\medskip

{\bf Proof of (1):} Consider the set 
\begin{equation} \label{eq:Electricity}
   I := \{\, r \in [a,r_0] \, : \,  \sup_{ (r,r_0]} ( \overline y - \underline y) < 0 \,\}  .
\end{equation} 
By continuity of $ \overline y - \underline y$, the set~$I$ contains $(r_0 - \varepsilon, \, r_0]$ and is open in $[a,r_0]$. More precisely, $I$~is an interval because if $r \in I$ then $(r,r_0] \subset I$. Hence we have either $I = [a,r_0]$ or $I = (t_0,r_0]$ for some $t_0 \in (a,r_0)$. However, in the last case we would also have $(\overline y - \underline y)(t_0) = 0$.
Furthermore, from~\eqref{Cauchy-Comparison} and by assumption~\eqref{Lipschitz} it follows that for each $r \in (t_0,r_0]$
$$
    ( \underline y - \overline y ) ' (r) \leq g( r,  \underline y (r) ) -  g( r,  \overline y (r) )  \leq L \, (\underline y - \overline y) (r) .
$$
Hence, by applying \eqref{eq:Observation1} to $z = \underline y  - \overline y$ in~$(t_0,r_0]$ we reach a contradiction. Therefore we must have $I = [a,r_0]$, which is equivalent to the claim.

\medskip

{\bf Proof of (2):}  
The steps are similar, but instead of \eqref{eq:Electricity} one must consider the set
$\displaystyle \{\, r \in [r_0, b] \, : \,  \inf_{ [r_0, r)} ( \overline y - \underline y) > 0 \,\}$. This set is a nonempty, open subinterval of~$[r_0,b]$ whose first endpoint is~$r_0$. The second endpoint cannot be less than~$b$ as a consequence of
assumption~\eqref{Lipschitz2} together with~\eqref{eq:Observation2} applied to $z = \underline y - \overline y$.
\end{proof}

In the present paper, such a comparison will be exploited by letting $g$ be the function $w(r, c)$ defined in \eqref{w}. 
Both conditions \eqref{Lipschitz} and \eqref{Lipschitz2} hold true
because $w$ is linear in~$c$, and satisfies
$$
           w (r, c_2) - w (r, c_1) = \alpha (r) \, (c_2 - c_1) 
$$
with $\displaystyle \alpha (r) := \Big( h(r)^{N-1} \, \int_{R_0}^r \frac{ds}{h(s)^{N-1}} \Big)^{-1}$ locally bounded for $r \in [R_0,S)$.
%
As a consequence, we deduce that given on some interval $[a,b] \subset [R_0, S)$ 
two functions $\overline y, \underline y \in C^1 ([a,b])$ satisfying
\begin{equation} \label{eq:CSI}
\left\{
   \begin{array}{c}
    \overline y'(r)\geq w(r, \overline y(r) ),
    \\
\noalign{\medskip}
    \underline y'(r) \leq w(r, \underline y(r) ),
   \end{array}
   \right.
\end{equation}
then 
\begin{equation} \label{eq:CSI2} 
   \left\{ 
   \begin{array}{c} 
    ( \overline y - \underline y)(a) \geq  0 
    \quad \Longrightarrow \quad \overline y - \underline y \geq 0  
    \, \hbox{ in } [a,b] ,
    \\
\noalign{\medskip}
    ( \overline y -  \underline y)(b) \leq 0 
    \quad \Longrightarrow \quad \overline y - \underline y \leq 0     
    \, \hbox{ in } [a,b] .
    \end{array} 
    \right.
\end{equation}

As discussed next, this comparison result turns out to be extremely useful to derive our results for the Bernoulli problem \eqref{pr2}. 
In fact we will use it to compare the radial solutions of problem \eqref{Dp2} with the Dirichlet boundary value $\varphi$ of problem \eqref{pr2}.

\subsection{Radial symmetry for the Bernoulli problem}

Similarly to what has been done in Section \ref{sec:rel_dic_prob}, the analysis of the radial symmetry for  the Bernoulli problem~\eqref{pr2}
is tackled by considering
the two geodesic balls $B_{R_i} (O)$  with radii given by
\begin{equation}\label{R_12}
R_1 = \min_{\partial \Omega} r > R_0\,, \qquad R_2 = \max_{\partial \Omega}\, r < S.
\end{equation}
We have $\overline B_{R_0}(O) \subset B_{R_1}(O)  \subset \Omega \subset B_{R_2}(O) \subset B_R(O)$ and from
the compactness of $\partial \Omega$ there exist $P_i \in \partial B_{R_i}(O) \cap \partial \Omega$.  
Thanks to Proposition~\ref{derivative_0}, for $i = 1,2$ we can consider the unique solutions $u_1,u_2 \in C^2(\mathcal{M} \setminus B_{R_0}(O))$ of
\begin{equation} \label{eq:Yorquis} 
  - \Delta u_i = f \mbox{ in $\mathcal{M} \setminus B_{R_0}(O)$}, 
  \qquad
  u_i = 0 \, \, \hbox{ on } \partial B_{R_0} ,
  \quad
  u_i = \varphi(R_i)  \, \, \hbox{ on } \partial B_{R_i}.
\end{equation} 
Since $u_1,u_2$ are radial, slightly abusing the notation we will consider them as functions of~$x$ as well as~$r$.
Following the arguments used in Lemma~\ref{lem:pasolini}, but now taking also into account that the functions $u, u_1, u_2$ vanish on $\partial B_{R_0} (O)$, we can likewise derive 
\begin{equation} \label{eq:MalTesta2}
 u \hbox{ is radial} 
  \, \,  \, \Longleftrightarrow \, \, \, u  \equiv u_i \hbox{ in\/ $\Omega  \setminus B_{R_0}  (O)$ for both\/ $i = 1,2$}. 
\end{equation} 

\medskip

The comparison stated above in \eqref{eq:CSI2} will then play a central role to justify that $u_1 \leq u \leq u_2$, which is one of the steps 
needed in the proof of our second main theorem.

\begin{proof}[Proof of Theorem \ref{th2}]
The argument is similar to the one in the proof of Theorem~\ref{th1}, but we need to take into account the presence of the ball $B_{R_0} (O)$. 
Consider the two functions $u_1, u_2$ defined by \eqref{eq:Yorquis}. We stress that $u_1(r)$ exists for all $r \in [R_0,S)$, not only for $r \in [R_0,R_1]$, which allows for a comparison with~$u$ along $\partial \Omega$ instead of~$\partial B_{R_1}(O)$. 
The proof is divided into three parts.

\medskip

\textit{Part 1.} Let us show that 
\begin{equation} \label{Unfinished}
     u_1 \leq u \leq u_2
    \quad
    \hbox{in } \overline \Omega \setminus B_{R_0}(O) ,
\end{equation}
as well as
\begin{equation} \label{Unfinished2}
     (u'_1 - u_{\nu}) (P_1) \geq 0
    \quad \hbox{and} \quad
      (u'_2 - u_{\nu}) (P_2) \leq 0.
\end{equation}

\medskip

 From \eqref{varphi2}, and taking into account the equality \eqref{key}  satisfied by $u_i$ we have
\[
\varphi'(r)  \geq w(r, \varphi(r)) ,
\quad
u_i'(r) = w(r,u_i(r))  ,
\quad
   u_i (R_i) = \varphi (R_i) .
\]
Hence, $(\varphi, u_i)$  satisfy the pair of inequalities \eqref{eq:CSI} and therefore from \eqref{eq:CSI2} we obtain 
$$
    (\varphi - u_1) (r) \geq 0 \hbox{ on }  (R_1,S),
    \qquad
    (\varphi - u_2) (r) \leq 0 \hbox{ on }  (R_0, R_2).
$$
As a result, we have
\[
 u_1 \leq 
 u = \varphi \leq u_2 
\quad \hbox{on } \partial \Omega .
\]
Since 
$\Delta (u -u_i)=0$ in $\Omega \setminus \overline B_{R_0}(O)$ with  $u - u_i=0$ on $\partial B_{R_0}(O)$ for $i  \in \{1,2\}$ (by construction of $u_i$) 
the maximum principle applied to $u - u_i$ gives~\eqref{Unfinished}.

\medskip

Furthermore, on the one hand we have $u(P_i) = u_i(R_i)$. On the other hand, we 
 observe that the boundaries of $\Omega$ and $B_{R_i} (O)$ are tangent at $P_i$, 
  and the outer normal~$\nu$ of~$\partial \Omega$ 
 at~$P_i$ coincides with the radial direction. Hence
\begin{equation}\label{inequalities}
u_1'(R_1) \ge u_\nu(P_1),
  \qquad 
u_2'(R_2) \le u_\nu(P_2),
\end{equation}
which gives the conclusion \eqref{Unfinished2}. 

\bigskip

\textit{Part 2.} 
Let us  prove that
\begin{equation}\label{zero}
 w(r_0,\varphi(r_0)) -  \nd(r_0) 
 =0 ,
 \quad
 \hbox{ for some } r_0 \in  [R_1,R_2].
\end{equation}
Recalling that $u_\nu(P_i) = \nd (R_i)$ by the boundary condition, and using the relation \eqref{key} satisfied by $u'_i$, the inequalities \eqref{inequalities} may be rewritten as
\[
 w(R_1,\varphi(R_1)) \ge \nd (R_1),
\qquad
 w(R_2,\varphi(R_2)) \le \nd (R_2),
\]
and therefore \eqref{zero} must hold.

\medskip

Let us check that $u$ is radial. Recalling \eqref{Unfinished}, by the strong maximum principle and \eqref{eq:MalTesta2} we have the alternative
\begin{equation} \label{eq:Alternativa-2} 
\left\{
\begin{array}{l}
\hbox{either $u_1 \equiv u \equiv u_2 $ in $\Omega \setminus B_{R_0} (O)$, and $u$ is radial};
\\
\noalign{\medskip}
\hbox{or $u_1 < u <  u_2 $ in $\Omega \setminus \overline B_{R_0} (O)$, and $u$ is not radial}.
\end{array}
\right.
\end{equation}

Now, on account of the definition of the ball $B_{R_1}(O)$, the interior sphere condition is satisfied at the point $P_1$.
Hence, by the Hopf Lemma we can rephrase the alternative \eqref{eq:Alternativa-2} as follows:

\begin{enumerate} \itemsep=5pt
\item[{\bf (i)}]
either $w (R_1, \varphi(R_1))- \nd(R_1) =0$ and in this case $u_1 \equiv u \equiv u_2$ in $\overline \Omega \setminus B_{R_0}(O)$;
\item[{\bf (ii)}]
or $w (R_1, \varphi(R_1))- \nd(R_1) > 0 \geq w (R_2, \varphi(R_2))- \nd(R_2)$ and $u$ is not radial.
\end{enumerate} 

\medskip
If the assumption (1) of the theorem holds, from \eqref{zero} we deduce that we are in case~{\bf (i)},
and so the solution $u$ is radial in $\Omega$. 

If the interior sphere condition is in addition assumed at each point of $\Omega$, then the Hopf Lemma holds at~$R_2$. Hence
the hypothesis (2) of the theorem, combined with~\eqref{zero} and the alternative above, gives that   
\[
w(R_2,\varphi(R_2)) - \nd (R_2) = 0 
 \quad \hbox{ and } \quad
 u_1  \equiv u \equiv u_2 \, \hbox{ in } \Omega,
\]
so $u$ is radial.

\medskip

\textit{Part 3.} 
Let us prove that $\Omega$ is a geodesic ball centered at $O$ if one of the assumptions (a), (b) or (c) holds.
Suppose we are in case (1) or (2), so that we have $u \equiv u_1 \equiv u_2$ in~$\overline \Omega \setminus B_{R_0}(O)$. Since the outer unit vector~$\nu$ to~$\partial \Omega$ coincides with~$\frac\partial{\, \partial r \,}$ at~$P_1$ and~$P_2$, the normal derivative $u_\nu(P_i)$ coincides with $w(R_i,\varphi(R_i))$, hence
\begin{equation}\label{equalities}
w(R_i,\varphi(R_i)) - \kappa(R_i) = 0
\quad
\mbox{for $i = 1,2$.}
\end{equation}
Assume, by contradiction, that $R_1 < R_2$.
Observe that the boundary values of~$u$ obviously satisfy $\varphi(r) = u_1(r)=u_2(r)$ for $r \in [R_1, R_2]$, and therefore $\varphi'(r) = w(r,u_i(r))$ for $r \in [R_1, R_2]$ and $i = 1,2$ as a consequence of~\eqref{key}. Thus, we have equality in \eqref{varphi2}.

If assumption~(a) holds, we immediately reach a contradiction, and therefore we must have $R_1 = R_2$.

Let us treat the case when either assumption (b) or (c) holds.
We will write $u(r)$ as well as $u(x)$ because $u$ is radial. We observe that the difference $w(r,\varphi(r)) - \nd(r)$ vanishes identically in $[R_1,R_2]$ because it is monotone by assumption and vanishes at the endpoints by~\eqref{equalities}. This means that
the normal derivative of~$u$ on~$\partial \Omega$, which is~(\ref{gradient2}),
coincides with the radial derivative $u'(r) = w(r,\varphi(r))$.
Moreover, since $u$ is radial, we may write
\[
\nabla u(x) = w(r,\varphi(r)) \, \frac\partial{\, \partial r \,},
\quad
\hbox{for all } 
x \in \partial \Omega.
\]
Since 
\[
\nabla u(x) \cdot \nu = \nd(r),
\quad
\hbox{for all } 
x \in \partial \Omega\,,
\]
we obtain
\begin{equation} \label{eq:Klug2}
w(r,\varphi(r)) \Big(\frac\partial{\, \partial r \,} \cdot \nu - 1 \Big) =0
\quad
\hbox{for all } 
x \in \partial \Omega.
\end{equation}

Now, if (b) holds, then $w(r,\varphi(r)) = \nd(r) \ne 0$ in $[R_1,R_2]$, and from~(\ref{eq:Klug2}) we get $\frac\partial{\, \partial r \,} \cdot \nu = 1$. Hence $\nu \equiv \frac\partial{\, \partial r \,}$ on~$\partial \Omega$, and the conclusion follows. 

Finally, suppose that (c)~holds. Since we are supposing $R_1 < R_2$, there exists $P \in \partial \Omega$ such that $\nu$ is not the radial direction. Denote by $N(P) \subset \partial \Omega$ a neighborhood of~$P$ where $\nu$ keeps different from~$\frac\partial{\, \partial r \,}$. We obviously have $\frac\partial{\partial r} \cdot \nu < 1$ in $N(P)$, and therefore \eqref{eq:Klug2} implies $u'(r) = 0$ in an interval $(a,b) \subset (R_1, R_2)$. Hence by~\eqref{integrale} we have
\[
\int_{R_0}^r h(t)^{N-1} \, f(t)\, dt =
h(R_0)^{N-1} \, u'(R_0)
\]
for $r \in (a,b)$. Differentiating with respect to~$r$ we obtain $f \equiv 0$ in $(a,b)$, which contradicts assumption~(c). Hence we must have $R_1 = R_2$ once again. This concludes the proof of Theorem \ref{th2}.
\end{proof}

We end this subsection with a criterion to guarantee the validity of assumption ~(1) in Theorem~\ref{th2}. In fact, such assumption follows as soon as we can control from above the rate of increase of the product $h(r)^{N-1} \, \nd (r)$ in terms of the function~$f$. 
More precisely, it is enough that the inequality
\begin{equation}\label{q}
h(r_2)^{N-1} \, \nd (r_2) - h(r_1)^{N-1} \, \nd (r_1)
+
\int_{r_1}^{r_2} h(t)^{N-1} \, f(t) \, dt
\le 0
\end{equation}
holds for every $r_1,r_2 \in (R_0,S)$ with $r_1 < r_2$.

\begin{proposition}\label{explanation}
If\/ $\varphi$ satisfies\/~{\rm(\ref{varphi2})} and\/ $\nd$ satisfies\/~{\rm(\ref{q})}, then condition\/~{\rm(1)} of Theorem~\ref{th2} holds true.
\end{proposition}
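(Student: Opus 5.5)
The plan is to show that the weighted difference
\[
D(r) := h(r)^{N-1}\big( w(r,\varphi(r)) - \nd(r) \big), \qquad r \in (R_0,S),
\]
is non-decreasing. Since $h>0$ on $(R_0,S)$, $D$ has the same sign and the same zeros as $w(\cdot,\varphi(\cdot)) - \nd(\cdot)$. So if $r_0$ is a zero, monotonicity gives $D \le D(r_0) = 0$ on $(R_0,r_0]$, which is exactly condition~(1) of Theorem~\ref{th2}.

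To compare $D(r_1)$ and $D(r_2)$ for $R_0 < r_1 < r_2 < S$, I use the radial solution $u := u_{r_1,\varphi(r_1)}$ from Proposition~\ref{derivative_0}. It satisfies $u=0$ on $\partial B_{R_0}(O)$ and $u(r_1) = \varphi(r_1)$, it extends to $[R_0,S)$, and it satisfies \eqref{key}, namely $u'(r) = w(r,u(r))$. In particular $u'(r_1) = w(r_1,\varphi(r_1))$. Integrating \eqref{rewritten} from $r_1$ to $r_2$, as in \eqref{integrale}, gives
\[
h(r_2)^{N-1} u'(r_2) = h(r_1)^{N-1} w(r_1,\varphi(r_1)) - \int_{r_1}^{r_2} h(t)^{N-1} f(t)\,dt .
\]

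Next I bring in $\varphi$. By \eqref{varphi2}, $\varphi$ is a supersolution of $y' = w(r,y)$, while $u$ is an exact solution. Since $(\varphi - u)(r_1) = 0$, the comparison \eqref{eq:CSI2} on $[r_1,r_2]$ gives $\varphi(r_2) \ge u(r_2)$. The map $c \mapsto w(r,c)$ is affine with positive slope $\alpha(r)$, so
\[
w(r_2,\varphi(r_2)) \ge w(r_2,u(r_2)) = u'(r_2).
\]
Combining this with the identity above yields
\[
h(r_2)^{N-1} w(r_2,\varphi(r_2)) \ge h(r_1)^{N-1} w(r_1,\varphi(r_1)) - \int_{r_1}^{r_2} h^{N-1} f\,dt .
\]
On the other hand, hypothesis \eqref{q} says exactly that
\[
h(r_2)^{N-1}\nd(r_2) \le h(r_1)^{N-1}\nd(r_1) - \int_{r_1}^{r_2} h^{N-1} f\,dt .
\]
Subtracting the second inequality from the first, the integral terms cancel and we get $D(r_2) \ge D(r_1)$, which is the claimed monotonicity.

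The only delicate point is the comparison step. Applying \eqref{eq:CSI2} requires an interval $[a,b] = [r_1,r_2] \subset (R_0,S)$ on which $\varphi$ and $u$ are $C^1$, and the local boundedness of $\alpha$ there. Both hold, because $\varphi \in C^1((R_0,S))$ and $u \in C^2$ on $[R_0,S)$. Once this is checked, the rest is bookkeeping.
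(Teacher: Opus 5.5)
Your proof is correct and uses the same ingredients as the paper: the radial solution of \eqref{Dp2} through a point of the graph of $\varphi$, integration of \eqref{rewritten} against \eqref{q}, the ODE comparison \eqref{eq:CSI2}, and the monotonicity of $w$ in its second argument. The only difference is bookkeeping. The paper anchors $u_0$ at a zero $r_0$ and compares backward on $(R_0,r_0]$, whereas you anchor at an arbitrary $r_1$ and compare forward, which gives the slightly stronger fact that $h^{N-1}\big(w(\cdot,\varphi(\cdot))-\nd\big)$ is non-decreasing on $(R_0,S)$ (and hence also the sign requirement in condition~(2) of Theorem~\ref{th2}).
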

\begin{proof}
Suppose that for some $r_0 \in (R_0, S)$ we have $w(r_0 ,\varphi(r_0)) - \nd (r_0)=0$. We must prove that
 the inequality $w(r,\varphi(r)) - \nd (r) \leq 0$ holds for every $r \in (R_0, r_0]$.

\medskip

Let $u_0$ denote the radial solution of the Dirichlet problem~\eqref{Dp2} with $R = r_0$ and $c = \varphi(r_0)$. By virtue of~\eqref{key} we have 
\begin{equation}\label{equivalent}
   u'_0(r_0) =  w(r_0 ,\varphi(r_0))  = \nd (r_0).
\end{equation}
Integrating~\eqref{rewritten} on $[r, r_0]$ we find
$$
   ( h^{N-1} \, u'_0) (r_0)  -  ( h^{N-1} \, u'_0) (r)  = - \int_{r}^{r_0} h^{N-1} (t) \, f(t) \, dt,
$$
whereas from assumption \eqref{q} we get
$$
   ( h^{N-1} \, \nd ) (r_0)  -  ( h^{N-1} \, \nd)  (r)  \leq - \int_{r}^{r_0}. h^{N-1} (t) \, f(t) \, dt.
$$
By subtracting and using \eqref{equivalent}, 
we deduce that  $h^{N-1} \, ( u_0'  - \nd) \leq 0$ on $(R_0, r_0]$ and therefore we obtain
\begin{equation} \label{eq:one-SideBis}
 \nd (r) \geq u_0'(r) \stackrel{\eqref{key}}{=} w \big(r , u_0 (r) \big)  \quad \mbox{on $(R_0, r_0]$}.
\end{equation}
On the other hand,  since $u_0 (r_0) = \varphi (r_0)$ and $(\varphi, u_0)$ satisfy the pair of inequalities~\eqref{eq:CSI} (by assumption~\eqref{varphi2} and  the equation~\eqref{key}), we deduce from
\eqref{eq:CSI2} that  $\varphi(r) \leq u_0 (r) $ for $r \in (R_0, r_0]$. Therefore, since $w$~is increasing in the second argument, inequality~\eqref{eq:one-SideBis} leads to
$$
   \nd (r) \geq w (r, u_0 (r)) \geq w (r, \varphi (r)) 
$$
for all $r \in (R_0, r_0]$, as claimed.
\end{proof}

\medskip

\begin{remark}~
\begin{enumerate} \itemsep=4pt
\item[(a)]
When $f \equiv 0$, condition \eqref{q} is equivalent to requiring the function $h^{N-1} \, \nd$ to be monotone non-increasing in the interval $(R_0, S)$.
\vspace{1mm}
\item[(b)]
When $\nd \in C^1$, after dividing by $r_2 - r_1$ and letting tend $r_1 \to r_2^-$, we can see that 
condition~\eqref{q} is equivalent to
\[
   -  \big( h^{N-1} \, \nd \big)' \geq h^{N-1} \, f.
\]
Such an inequality can equivalently be rephrased by saying that any antiderivative of~$\nd$ is a supersolution to the ODE~\eqref{rewritten} (i.e., a radial supersolution to~(\ref{Poisson})). 
\end{enumerate} 
\end{remark} 

\medskip
\subsection{The case of constant boundary data}
In this last subsection we focus on problem \eqref{pr2} in the special and important case when $\varphi$ is a constant. In other words, we consider the Bernoulli problem 
\begin{equation} \label{eq:ConstantData}
	\begin{cases}
		- \Delta {u}=f(r) &\mbox{in $\Omega \setminus \overline B_{R_0}(O)$},\\
		u=0 &\mbox{on $\partial B_{R_0}(O)$},\\
		u=c &\mbox{on $\partial \Omega$},\\
		u_\nu = \nd (r) &\mbox{on $\partial \Omega$},
	\end{cases}
\end{equation}
where $c$ is a prescribed constant.
Firstly, as a special case of Theorem~\ref{th2} we mention the following result:

\begin{corollary}\label{annulus}
Let\/ $\Omega \subset \mathcal{M}$ be a $C^1$ domain satisfying \eqref{eq:Ipotesi3}. Consider 
 $c \in \mathbb R$, and two functions $f \in C^0( [ R_0,S))$ and $\nd \in C^0( (R_0,S))$ satisfying \eqref{q} for which 
the overdetermined problem~\eqref{eq:ConstantData} has a solution\/~$u$.
If\/ $c \leq 0$ and $f \geq 0$, then $u$ is radial; if furthermore\/ $c <0$, then\/ $\Omega$ is a geodesic ball centered at $O$. 
 \end{corollary}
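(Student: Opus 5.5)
The plan is to obtain the corollary as a direct specialization of Theorem~\ref{th2} with the constant boundary datum $\varphi \equiv c$. Condition~(1) of that theorem will come from Proposition~\ref{explanation}, and the geodesic-ball conclusion from condition~(a). Clearly $\varphi \equiv c$ belongs to $C^1((R_0,S))$, and the remaining regularity and inclusion hypotheses of Theorem~\ref{th2} are exactly those assumed in the corollary. So the work reduces to verifying the sign conditions.

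First I will check the differential inequality~\eqref{varphi2}. Since $\varphi' \equiv 0$, it reads $w(r,c) \le 0$ for all $r \in (R_0,S)$. From the definition~\eqref{w}, $w(r,c) = \alpha(r)\,\big(c - J(r)\big)$, where
\[
\alpha(r) = \Big( h(r)^{N-1} \int_{R_0}^r \frac{ds}{h(s)^{N-1}} \Big)^{-1} > 0
\]
for $r > R_0$, and
\[
J(r) = \int_{R_0}^r \int_s^r \Big(\frac{h(t)}{h(s)}\Big)^{N-1} f(t)\,dt\,ds .
\]
In $J(r)$ the integration region satisfies $R_0 \le s \le t \le r$, $h>0$, and $f \ge 0$, so $J(r) \ge 0$. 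Combined with $c \le 0$ this gives $w(r,c) \le \alpha(r)\,c \le 0$. Hence~\eqref{varphi2} holds. When $c<0$, the same estimate gives $w(r,c) \le \alpha(r)\,c < 0$, so the inequality in~\eqref{varphi2} is strict.

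Next, since $\nd$ satisfies~\eqref{q} and $\varphi$ satisfies~\eqref{varphi2}, Proposition~\ref{explanation} shows that condition~(1) of Theorem~\ref{th2} holds. Theorem~\ref{th2} then first yields that $w(\cdot,c)-\nd$ has a zero in $(R_0,S)$, and then, under~(1), that $u$ is radial. If moreover $c<0$, the strict inequality established above is exactly assumption~(a) of Theorem~\ref{th2}, so $\Omega$ is a geodesic ball. By Part~3 of the proof of Theorem~\ref{th2}, where it is shown that $R_1=R_2$, this ball is $B_{R_1}(O)$, centered at $O$.

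There is no real obstacle here. The only point needing care is the sign of the double integral in $w$, specifically the orientation $s \le t$ of the inner integration, which is what makes $f \ge 0$ enter with the correct sign. One should also note that $\alpha(r)$ is finite and positive for $r > R_0$, which is what turns $c<0$ into strict negativity of $w(r,c)$.
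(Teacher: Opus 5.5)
Your proposal is correct and follows the paper's proof essentially verbatim. You check that $-w(r,c)\ge 0$, strictly when $c<0$, which gives \eqref{varphi2} and assumption (a); you invoke Proposition~\ref{explanation} for condition (1); and you conclude with Theorem~\ref{th2}. Your remarks on the orientation $s\le t$ in the double integral and on the positivity of $\alpha(r)$ only make explicit what the paper leaves to the reader.
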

\begin{proof}
Referring to the definition \eqref{w}, note that for $\varphi \equiv c \leq 0$ and $f \geq 0$, we have
\begin{eqnarray*}
  \varphi' (r) - w (r, \varphi(r)) 
  &=&
  {} - w(r,c) 
 \nonumber \\
 &=&
 {} - \frac1{\, h(r)^{N-1} \, \int_{R_0}^r \frac{ds}{h(s)^{N-1}} \,}
\left( c - \int_{R_0}^r \int_{s}^r \Big( \frac{h(t)}{h(s)} \Big)^{N-1} \, f(t) \, dt \, ds \right)
\\
&\geq& 0.
\end{eqnarray*}
Therefore the differential inequality~\eqref{varphi2} is satisfied, and this inequality is strict when $c <0$. 
By Proposition \ref{explanation}, assumption (1) of Theorem \ref{th2} is also satisfied and the conclusion follows.
\end{proof}

\medskip

We now discuss how such a result can be derived  in the case $f \leq 0$ for solutions that satisfy $u \geq c$. To reach this goal, we recall
an observation that was done in \cite[Corollary~3.2]{greco_JAM} for the case when $\mathcal{M} = \R^N$, allowing to compare $u$ and~$u_2$ without any sign assumptions on $f$.

\begin{lemma}[Comparison]\label{comparison}
Let\/ $\Omega \subset \mathcal{M}$ be a\/ $C^1$ domain satisfying \eqref{eq:Ipotesi3}, 
$f \in C^0([R_0, \allowbreak S))$ and consider a constant $c \leq 0 $. Let $u$ be a solution of the Dirichlet problem
\[
\begin{cases}
-\Delta u = f(r) &\mbox{in~$\Omega \setminus \overline B_{R_0}(O)$,}
\\
u = 0 &\mbox{on~$\partial B_{R_0}$, }
\\
u = c &\mbox{on~$\partial \Omega$,}
\end{cases}
\]
and let\/ $u_2$ be the radial solution of
\[
\begin{cases}
-\Delta u = f(r) &\mbox{in~$B_{R_2}(O) \setminus \overline B_{R_0}(O)$, }
\\
u = 0 &\mbox{on~$\partial B_{R_0}$, }
\\
u = c &\mbox{on~$\partial B_{R_2}$,}
\end{cases}
\]
with $R_2$ defined as in~\eqref{R_12}. 
If\/ $u \ge c$ in\/ $\Omega \setminus \overline B_{R_0}(O)$, then\/ $u \le u_2$ in\/ $\Omega \setminus  B_{R_0}(O)$.
\end{lemma}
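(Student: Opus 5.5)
The approach is a weak maximum principle for the harmonic function $z := u - u_2$ on $\Omega \setminus \overline B_{R_0}(O)$. Here $u_2$ is extended radially to all of $[R_0,S)$ by Proposition~\ref{derivative_0}. We have $z = 0$ on $\partial B_{R_0}(O)$ because both functions vanish there. On $\partial\Omega$ we have $z(x) = c - u_2(r)$ with $r = d(x,O) \in [R_1,R_2]$, and $z(P_2) = 0$ by the definition of $u_2$. Hence everything reduces to showing that $\max_{\partial\Omega} z \le 0$.

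Without a sign condition on $f$, the function $u_2$ may drop below $c$ on part of $[R_1,R_2]$, so the boundary inequality is not automatic. This is exactly where the hypothesis $u \ge c$ must enter.

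I would argue by contradiction. Suppose $\max_{\overline\Omega\setminus B_{R_0}} z = z(x_0) > 0$, attained (by the maximum principle) at some $x_0 \in \partial\Omega$ with $r_0 = d(x_0,O) < R_2$ and $u_2(r_0) < c$. Two pieces of first-order information are available at $x_0$.
\begin{enumerate}
\item[(i)] Since $u \ge c = u(x_0)$ in $\Omega$, we get $u_\nu(x_0) \le 0$. Moreover $u$ is constant on $\partial\Omega$, so its tangential gradient vanishes at $x_0$.
\item[(ii)] Since $z|_{\partial\Omega}$ is maximal at $x_0$, the tangential gradient of $u_2$ also vanishes at $x_0$. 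Hence $\nabla u_2(x_0) = u_2'(r_0)\,\partial_r$ is parallel to $\nu$. Hopf's lemma (when an interior ball is available, or else just the maximum condition) gives $z_\nu(x_0) \ge 0$, i.e.\ $\nabla u_2(x_0)\cdot \nu \le u_\nu(x_0) \le 0$.
\end{enumerate}

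The plan is then to exploit the radial structure through a sliding argument. Let $(a,b) \ni r_0$ be the maximal interval on which $u_2 < c$. Note that $u_2(R_0) = 0 \ge c$ and $u_2(R_2) = c$, so $R_0 \le a < b \le R_2$ and $u_2(a) = u_2(b) = c$. On the part of $\Omega$ lying in the annulus $B_b(O)\setminus \overline B_a(O)$ I would compare $u$ not with $u_2$ but with the radial solutions $u_\rho$ of~\eqref{Dp2} with value $c$ on $\partial B_\rho$. I would slide $\rho$ down from $R_2$, using the comparison~\eqref{eq:CSI2} (the map $\rho \mapsto u_\rho$ is monotone, via the linearity of $w$ in $c$). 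At the first contact radius, the contact point must lie on $\partial\Omega$, where $u = c$. There one obtains $u_\rho(r) < c$ while $u_\rho$ attains $c$ further out, which contradicts the sign $u_\nu \le 0$ together with the parallelism in~(ii).

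As a backup for the final step, I would use the flux identity obtained by integrating $-\Delta u = f$ over $\Omega \cap (B_r(O)\setminus \overline B_{R_0}(O))$ and comparing it with~\eqref{integrale} for $u_2$. The point is to show that $h^{N-1}u_2'$ is controlled by the (nonpositive) boundary flux of $u$, which would force $u_2 \ge c$ on $[R_1,R_2]$.

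The main obstacle is precisely ruling out boundary points of $\partial\Omega$ in the region $\{u_2 < c\}$, i.e.\ converting the hypothesis $u \ge c$ (equivalently $u_\nu \le 0$ on $\partial\Omega$) into the pointwise bound $u_2 \ge c$ on the relevant radii. Once that is done, the conclusion $u \le u_2$ in $\Omega \setminus B_{R_0}(O)$ follows from the weak maximum principle exactly as in Part~1 of the proof of Theorem~\ref{th2}.
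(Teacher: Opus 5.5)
There is a genuine gap, and it is exactly the step you flag as ``the main obstacle'': you never exclude a positive maximum of $z=u-u_2$ at a point $x_0\in\partial\Omega$ with $r_0=d(x_0,O)\in[R_1,R_2)$.

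The sliding argument relies on the claim that $\rho\mapsto u_\rho$ is monotone ``via the linearity of $w$ in $c$''. That claim is not justified. All the $u_\rho$ solve $y'=w(r,y)$ with equality, so \eqref{eq:CSI2} only shows that any two of them are ordered, since their difference is a multiple of $\int_{R_0}^r h(s)^{1-N}\,ds$. The direction of that order is governed by the sign of $u_\rho'(\rho)=w(\rho,c)$, which has no fixed sign under the hypotheses of the lemma.

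The first-order information (i)--(ii) at $x_0$ cannot close the argument either. It only gives $\nabla u_2(x_0)\cdot\nu\le u_\nu(x_0)\le 0$ with $\nu=\pm\partial_r$, or nothing at all if $u_2'(r_0)=0$. Knowing that $u_\rho<c$ at some radius while $u_\rho=c$ further out says nothing about the sign of $u_\rho'$ there, nor about whether $\nu$ points towards or away from $O$. The flux-identity backup is not carried out. It would involve $\partial_r u$ on $\partial B_{R_0}(O)$ and on $\Omega\cap\partial B_r(O)$, and you have no pointwise control of these against $u_2'$.

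The missing idea is a zeroth-order comparison along the sphere through $x_0$, rather than first-order information at $x_0$. Since $\Omega$ is connected, contains $O$, and contains points near $P_2$ at radius close to $R_2>r_0$, there is $P\in\Omega$ with $d(P,O)=r_0$. Radial symmetry of $u_2$ and the hypothesis $u\ge c$ then give
\[
z(P)=u(P)-u_2(r_0)\ge c-u_2(r_0)=u(x_0)-u_2(r_0)=z(x_0)=\max z .
\]
So the maximum is also attained at an interior point of $\Omega\setminus\overline B_{R_0}(O)$. The strong maximum principle makes $z$ constant, and that constant is $0$ because $z=0$ on $\partial B_{R_0}(O)$. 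This contradicts $z(x_0)>0$.

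Combined with the trivial cases where the maximum sits on $\partial B_{R_0}(O)$ or at radius $R_2$ (where $z=0$), this is the paper's whole proof. None of the sliding or flux machinery is needed.
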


\begin{proof}
 If we know in advance that $u \le u_2$ on~$\partial \Omega$, then the conclusion is a consequence of the maximum principle applied to the harmonic function 
 $\omega := u - u_2$ in $\Omega \setminus \overline B_{R_0}(O)$. The purpose of the lemma is to replace such a boundary inequality with the assumption $u \ge c$ in $\Omega \setminus \overline B_{R_0}(O)$. 

\medskip

Let us set
$\displaystyle
M := \max_{\overline \Omega \setminus B_{R_0}(O)} \omega$ which, by the maximum principle, is attained at some $P_0 \in \partial \Omega \cup \partial B_{R_0}$.
Denoting by $r_0 \in \allowbreak [R_0, R_2]$ the radial coordinate of~$P_0$, we distinguish two cases.

Case (i): $r_0 \in \{ R_0, R_2\}$.
In this case, $P_0 \in \partial B_{R_0} (O)$ or $P_0 \in \partial \Omega \cap \partial B_{R_2} (O) $, and from the definition of $u_2$ we deduce that $\omega (P_0) = 0$. Namely, 
$M = 0$ and so $u - u_2 \leq 0$ as claimed.

Case (ii): $r_0 \in (R_0, R_2)$.
In this case we have $P_0 \in \partial \Omega$ and $u(P_0) =c$. From the fact that  $\Omega$ is connected, we 
observe that the set $\Sigma_{r_0} = \Omega \cap \allowbreak \partial B_{r_0}(O)$ is nonempty. 
Considering then any $P \in \Sigma_{r_0} $ (whose radial coordinate is $r_0$),  from the assumption $u \ge c$ and the fact that $u_2$ is radial, we get
\[
\omega(P) =  u(P) - u_2 (r_0)  \ge c - u_2(r_0)  = u(P_0) - u_2 (r_0)  = \omega(P_0) = M.
\]
Applying the strong maximum principle, we deduce in this case that the harmonic function~$\omega \equiv M$ in $\overline \Omega \setminus B_{R_0}$. So
$\omega \equiv 0$ (since it vanishes on $\partial B_{R_0} (O)$) and the claim also holds in that case.
\end{proof}

Theorem~\ref{th2} together with the previous lemma allow us to show the following corollary, whose special case of $\mathcal{M}=\RN$ follows also from \cite[Theorem~1.3]{greco_JAM}.

\begin{corollary}
Let\/ $\Omega \subset \mathcal{M}$ be a $C^1$ domain satisfying~\eqref{eq:Ipotesi3}. Consider a constant $c$ and two functions $f \in C^0([R_0,S))$, $\nd \in C^1((R_0,S))$ such that
\begin{equation}\label{old}
{} - \frac{d}{\, dr \,} \Big( h(r)^{N-1}  \, \nd (r) \Big)
\ge
h(r)^{N-1} \, f(r),
\quad
f \leq 0 , 
\quad
c \leq 0 .
\end{equation}
If  the overdetermined problem\/~\eqref{eq:ConstantData} admits a solution\/~$u$
satisfying $u \geq c$ in\/ $\Omega \setminus B_{R_0}(O)$, 
then $u$ is radial, and if in addition\/ $c <0$ then the domain $\Omega$ is a geodesic ball centered at the origin. 
\end{corollary}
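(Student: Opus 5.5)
The plan is not to apply Theorem~\ref{th2} directly, because with $f\le 0$ the inequality \eqref{varphi2}, i.e.\ $-w(r,c)\ge 0$, may fail. Instead I would rerun the two-sided comparison from its proof, replacing the upper comparison by Lemma~\ref{comparison}. Let $R_1,R_2,P_1,P_2$ be as in \eqref{R_12}. Let $u_1,u_2$ be the radial solutions from \eqref{eq:Yorquis} with $\varphi\equiv c$, extended to $[R_0,S)$ by Proposition~\ref{derivative_0}.

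\emph{Step 1 (the two comparisons).} Since $u\ge c$ and $c\le 0$, Lemma~\ref{comparison} gives $u\le u_2$ in $\overline\Omega\setminus B_{R_0}(O)$. Since $u_2(P_2)=u(P_2)$, this yields $u_2'(R_2)\le u_\nu(P_2)=\nd(R_2)$.

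For the lower comparison I would work only in the inner annulus $A_1=B_{R_1}(O)\setminus\overline B_{R_0}(O)\subset\Omega\setminus\overline B_{R_0}(O)$. There $u-u_1$ is harmonic, vanishes on $\partial B_{R_0}$, and is $\ge 0$ on $\partial B_{R_1}$ because $u\ge c=u_1$. By the maximum principle, $u\ge u_1$ in $A_1$, with equality at $P_1$. The strong maximum principle and Hopf's lemma at $P_1$ then leave two options:
\begin{itemize}
\item either $u\equiv u_1$ in $A_1$, hence in $\Omega\setminus B_{R_0}(O)$ by unique continuation, so $u$ is radial;
\item or $u_1'(R_1)>u_\nu(P_1)=\nd(R_1)$.
\end{itemize}

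\emph{Step 2 (excluding the strict alternative).} This replaces condition~(1). By \eqref{rewritten} we have $(h^{N-1}u_2')'=-h^{N-1}f$. By \eqref{old}, $(h^{N-1}\nd)'\le -h^{N-1}f$. Hence $h^{N-1}(u_2'-\nd)$ is non-decreasing. Since it is $\le 0$ at $R_2$, we get $u_2'(R_1)\le\nd(R_1)$.

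Next compare $u_1$ and $u_2$. The difference $u_2-u_1$ is a radial harmonic function vanishing at $R_0$, so $u_2-u_1=A\int_{R_0}^r h^{1-N}$ for some constant $A$. Moreover $u_2(R_1)\ge u(P_1)=c=u_1(R_1)$, so $A\ge 0$ and therefore $u_2'\ge u_1'$. This gives $u_1'(R_1)\le u_2'(R_1)\le\nd(R_1)$, which contradicts the strict Hopf inequality. Hence $u\equiv u_1$, and $u$ is radial.

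\emph{Step 3 (geodesic ball when $c<0$).} This is the step I expect to be the main obstacle. Suppose $R_1<R_2$. Then $u_1\equiv c$ on $[R_1,R_2]$, so $u_1'=0$ and $f=0$ there. The sign $c<0$ must then give a contradiction.

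My first attempt would be to show that $u_1'(R_1)<0$, using that $h^{N-1}u_1'$ is non-decreasing ($f\le 0$) and that $u_1$ drops from $0$ to $c<0$ on $[R_0,R_1]$. This alone does not seem to force it, however. So I would also exploit $u\ge c$ near points of $\Omega$ with $r$ slightly less than $R_1$, together with \eqref{old} at the boundary, which must hold with $\nd=0$ on $[R_1,R_2]$.

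If that fails, the fallback is to verify condition~(a) of Theorem~\ref{th2} on the relevant range, i.e.\ strictness $-w(r,c)>0$ for $r\in[R_1,R_2]$, and then invoke Part~3 of its proof.
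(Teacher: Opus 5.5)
\paragraph{Steps 1--2 (radiality).} These are correct, and they follow a different route from the paper's.

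The paper verifies \eqref{varphi2} on $(R_0,R_2]$. Lemma~\ref{comparison} gives $u_2'(R_2)\le u_\nu(P_2)\le 0$. Then $f\le 0$ makes $h^{N-1}u_2'$ non-decreasing, so $u_2'\le 0$ and $u_2\ge c$ on $[R_0,R_2]$, whence $-w(r,c)\ge -w(r,u_2(r))=-u_2'(r)\ge 0$. After that, \eqref{old} is integrated to \eqref{q}, Proposition~\ref{explanation} supplies condition~(1), and Theorem~\ref{th2} is invoked (tacitly localized to $(R_0,R_2]$).

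You bypass \eqref{varphi2} and Proposition~\ref{explanation} entirely:
\begin{itemize}
\item your lower comparison is confined to $B_{R_1}(O)\setminus\overline B_{R_0}(O)$, where $u\ge c$ gives the boundary inequality directly;
\item \eqref{old} enters only through the monotonicity of $h^{N-1}(u_2'-\kappa)$, combined with $u_2'-u_1'=A\,h^{1-N}$, $A\ge 0$.
\end{itemize}
This is shorter and self-contained. It also never uses $f\le 0$, so it gives radiality for $f$ of either sign (under $u\ge c$, $c\le 0$). The paper needs $f\le 0$ precisely to obtain \eqref{varphi2}.

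\paragraph{Step 3 (geodesic ball when $c<0$).} This is a genuine gap, and it cannot be filled: the second assertion of the corollary is false as stated. Your own reduction ($u_1\equiv c$, $f\equiv 0$, $\kappa\equiv 0$ on $[R_1,R_2]$) is compatible with every hypothesis.

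Here is a counterexample. Fix $R_0<\rho<R<S$ and let $f(r)=-\lambda\max\{\rho-r,0\}$ with $\lambda>0$. Let $U$ be the radial function with $U(R_0)=0$ and $h(r)^{N-1}U'(r)=\int_r^{\rho}h(t)^{N-1}f(t)\,dt$. Then:
\begin{itemize}
\item $U$ solves \eqref{rewritten}, with $U'<0$ on $[R_0,\rho)$ and $U'=0$ on $[\rho,S)$;
\item $U(\rho)$ is a negative multiple of $\lambda$, so for any prescribed $c<0$ one can choose $\lambda$ with $U\equiv c$ on $[\rho,S)$;
\item with $\kappa\equiv 0$, condition \eqref{old} reduces to $0\ge h^{N-1}f$, which holds.
\end{itemize}
Now take any $C^1$ domain with $\overline B_\rho(O)\subset\Omega\subset B_R(O)$, e.g.\ an ellipsoid that is not a ball in $\mathcal{M}=\mathbb{R}^N$. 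Then $u=U$ solves \eqref{eq:ConstantData} with $c\le u\le 0$, and $u=c$, $u_\nu=0=\kappa$ on $\partial\Omega$. Yet $\Omega$ is not a ball.

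\paragraph{Where the paper's argument fails.} The paper asserts that \eqref{varphi2} is strict when $c<0$. But both inequalities in $-w(r,c)\ge -w(r,u_2(r))=-u_2'(r)\ge 0$ become equalities wherever $u_2(r)=c$. Once $u$ is radial with $R_1<R_2$, one has $u_2=u_1=c$ on $[R_1,R_2]$; this is exactly how Part~3 of the proof of Theorem~\ref{th2} finds equality in \eqref{varphi2} there.

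Consequently, your fallback via condition~(a) would reproduce the paper's error. Your first attempt fails as well, since $u_1'(R_1)=0$ in the example. Conditions (b) and (c) of Theorem~\ref{th2} also fail in the example.

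\paragraph{A possible repair.} The conclusion is restored under an extra hypothesis, for instance $\kappa\neq 0$ on $(R_0,S)$, or $f$ not vanishing identically on any subinterval. With either one, your Step~3 reduction yields the contradiction at once, since it forces $\kappa(R_1)=u_1'(R_1)=0$ and $f\equiv 0$ on $[R_1,R_2]$.
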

\begin{proof}
Note that the first condition in \eqref{old} implies the validity of~\eqref{q} (by integrating), and so Proposition~\ref{explanation} guarantees the hypothesis (1) of Theorem~\ref{th2}. 
Let us check that the condition~\eqref{varphi2} also holds.

\medskip

By Lemma~\ref{comparison} we know that $u_2 \ge u \geq c$ in $\Omega \setminus B_{R_0}(O)$, hence for every $P_2 \in \partial B_{R_2} \cap \partial \Omega$ we have $u_2'(R_2) \le u_\nu(P_2) \le 0$. 
Since $f \le 0$, the ODE~\eqref{rewritten} shows that the product $r^{N-1} \, u_2'(r)$ is non-decreasing, hence $u'_2 (r) \leq 0$ for all $r \in [R_0,R_2]$. 
Therefore, by the monotonicity of $w$ in the second argument, we have
\begin{eqnarray*}
  \varphi' (r) - w (r, \varphi(r)) 
  &=&
  {} - w(r,c) 
 \nonumber \\
 &\geq&
 {} - w(r, u_2(r))
     \nonumber \\
 &\stackrel{\eqref{key}}{=}& {} - u'_2(r) 
      \nonumber \\
&\geq&
 0
\end{eqnarray*}
and the inequality is strict when $c <0$. Consequently, the conclusion follows from Theorem~\ref{th2}.
\end{proof}

\end{document}